\newtheorem{theorem}{Theorem}
\newtheorem{definition}{Definition}
\newtheorem{example}{Example}
\newtheorem{lemma}{Lemma}
\newtheorem{remark}{Remark}
\newtheorem{corollary}{Corollary}
\newtheorem{proposition}{Proposition}
\newtheorem{assumption}{Assumption}
\title{Leibniz-Dirac structures and nonconservative systems
with constraints}
\author{\"{U}nver \c{C}ift\c{c}i\thanks{\small{uciftci@nku.edu.tr}} 
 \\
\small{ Department of Mathematics}, \small{Nam\i k Kemal University,}\\ \small{59030 Tekirda\u{g}, Turkey}\\[1.5ex]
\small{ Johann Bernoulli Institute for Mathematics and Computer Science,}\\ \small{University of Groningen,} \small{PO Box 407,}\\ 
\small{9700 AK Groningen, The Netherlands} \\
}
\begin{document}
\maketitle

\abstract{
Although conservative
Hamiltonian systems with
constraints can be formulated in
terms of Dirac structures, 
a more general framework is
necessary to cover
also dissipative systems such as
gradient and metriplectic systems
with constraints.
We define Leibniz-Dirac structures which
lead to a natural generalization of Dirac
and Riemannian structures, for instance. From modeling point of
view, Leibniz-Dirac structures make it easy to formulate 
implicit  dissipative Hamiltonian systems. 
We give their exact characterization in terms of
bundle maps from the tangent bundle to the
cotangent bundle and vice verse. Physical systems which can be formulated
in terms of Leibniz-Dirac structures are discussed.}


\vspace*{1cm}

\noindent
MSC 2010 numbers: 53C15, 53D17, 70H99
\noindent 

\noindent
\noindent

\section{Introduction}
\label{}
Dirac structures embody a number of geometric
structures such as symplectic, Poisson, foliation,
complex geometries \cite{Courant90,Gualteri11}. Since their first introduction 
there have been a great number of work done over the years,
which is still growing.
One of the most striking features of Dirac
structures is that they can give a geometric
picture of Hamiltonian systems with constraints,
holonomic or nonholonomic \cite{JotzRatiu08}.
Nevertheless, Dirac structures are
insufficient in formulating non-conservative Hamiltonian
systems such as gradient systems and systems with
damping systems.
In that sense, recently some attempts have been done
to put these systems into a rather Hamiltonian form.

For example in \cite{Blankenstein03}, a generalization of
Dirac structures is given in terms of an
inner product of split sign on the Pontryagin bundle
instead of the natural symmetric pairing. We
specify this definition in order to cover the
physical examples which are aimed to be
put into the Hamiltonian context. In \cite{OrtegaPlanas-Bielsa04},
the authors use the notion of Leibniz structures
\cite{GrabowskiUrbanski99} which is a generalization
of Poisson structures, whose tensor is not necessarily
skew-symmetric. Our approach is quite similar but we 
work on the Pontryagin bundle and we
also deal with systems with constraints
on the manifold. 
In \cite{NguyenTurski01}, dissipative Hamiltonian
systems with constraints are studied
with Dirac's original method of
reduced brackets. 
For other recent work on the
generalizations of the conservative Hamiltonian systems
we refer to \cite{Balseiroetal09,CendraGrillo06}
and the references therein.
Our motivation in this paper is to 
give a generalization of Dirac structures and
study their geometric features in order to construct a general framework of
non-conservative Hamiltonian systems.

We define Leibniz-Dirac structures 
by weakening the defining properties of Dirac structures as follows:
Let $V$ be a vector space with its dual
denoted by $V^*$. A subspace
$L\subset V \oplus V^*$
is called a Dirac structure if it is maximally isotropic
under the symmetric pairing
\begin{equation*}
\langle (v_1,\eta_1), (v_2,\eta_2) \rangle_{+}=\frac{1}{2}(
\langle \eta_1 | v_2 \rangle + \langle \eta_2 | v_1 \rangle)
\end{equation*}
for all $(v_1,\eta_1),(v_2,\eta_2)\in V \oplus V^*$, where
$\langle \, | \, \rangle$ denotes
the natural pairing between vectors and
co-vectors. As a result, it is shown in \cite{Courant90}
for a Dirac structure that the equations,
which are called the \emph{characteristic equations},
\begin{equation}\label{crac intro}
\rho(L)^{\circ}=L \cap V^* \,\,\\\ \mbox{and} \,\, \\\
\rho^*(L)^{\circ}=L \cap V 
\end{equation}
are satisfied, where $\rho$ and $\rho^*$
denote the projections from
$V \oplus V^*$ onto the first and second factor respectively, and
($^{\circ}$) stands for the annihilation operator.
Accordingly, there exist skew-symmetric linear maps
$\Omega:\rho(L) \rightarrow \rho(L)^*$
and $\Pi:\rho^*(L) \rightarrow  \rho^*(L)^*$.
A Leibniz-Dirac structure (LD structure in short)
is defined to be a subspace $L \in V \oplus V^*$
such that either of the characteristic equations 
(\ref{crac intro}) is satisfied. 
Then it turns out that this definition is
can be equivalent to the existence of a subspace $E \in V$
(or $F \in V^*$) and a linear map
$\Omega:E\rightarrow E^*$
(or $\Pi:F \rightarrow  F^*)$.  

The definition of LD
structures is so broad that many geometric 
structures can be treated as LD structures.
Of course, Dirac structures form a subfamily
of LD structures but besides those; metric, metriplectic
and Leibniz structures are covered by the
LD structures. Having this in mind, developing the basic geometry of
LD structures further forms one of the ingredient of
the paper,
and it can be said that most of the 
results on LD structures derived 
in the present work are a refinement of
Dirac structures. We see that
LD structures share some properties of 
Dirac structures such as being a Lagrangian subspace
for a suitable inner product. Their 
extensions on manifolds is defined
the same way
as in the Dirac structures. Several properties of
linear and smooth LD structures are discussed.

Another ingredient of the present work
is to show that LD structures
are general enough to
express possibly dissipative implicit Hamiltonian systems
with constraints. Dynamics on smooth LD structures is
studied in detail and examples are presented 
to illustrate the theoretical part. Examples show that LD structures
are the proper geometric arena for numerous
physical systems.

The paper is organized as follows. In Section
\ref{smooth LD str} we define linear  LD structures on manifolds and
give their characterization in terms of linear maps in Theorem \ref{linear main}.
Then it is shown that LD structures are Lagrangian subspaces
with respect to suitable symmetric pairings. Smooth
LD structures on manifolds are defined in Section \ref{smooth LD str},
where we also
relate LD structures to Leibniz structures.  In Section \ref{Dynamics} we
study admissible functions on manifolds with LD structures, then
we study  Hamiltonian dynamics of LD manifolds. 
We present several physical examples which can also
be given in different formalisms.
The paper ends with some conclusions and
future questions.    

\section{Linear Leibniz-Dirac structures}
\label{linear LD str}

Let $V$ be an $n$-dimensional vector space and $V^*$ be
its dual space. Consider the direct product space
$V \oplus V^*$ and denote the projections from $V \oplus V^*$
onto $V$ and $V^*$ by $\rho$ and $\rho^*$, respectively.
If $L \in V \oplus V^*$ is a subspace, it is clear that 
$\mbox{ker}\rho|_L=L \cap V^*$ and $\mbox{ker}\rho^*|_L=L \cap V$ (cf. \cite{Courant90}).
Throughout $L \cap V$ (resp. $L \cap V^*$) will be regarded
either as a subspace of $V$ (resp. $V^*$) or $V \oplus V^*$.  
For a subspace $W\in V$ we denote the annihilator by $W ^{\circ}$. 
We denote by 
$\langle \eta | v \rangle$ the natural pairing of a co-vector
$\eta  \in V^*$ and a vector
$v \in V$. After the introduction of notational convention
we give the following definition.

\begin{definition}\label{linear definition}
A Leibniz-Dirac structure (LD structure for short) on $V$ is
a subspace $L \subset V \oplus V^*$ which satisfies 
at least one of the following conditions:
\begin{eqnarray}
\rho(L)^{\circ}&=&L \cap V^*  \label{characteristic 1} \\
\rho^*(L)^{\circ}&=&L \cap V.   \label{characteristic 2}
\end{eqnarray} 
LD structures satisfying (\ref{characteristic 1}) 
are called forward LD structures, accordingly the ones satisfying
(\ref{characteristic 2}) are called backward LD structures.
\end{definition}

The meaning of the adjectives `forward' and `backward'
will be understood after Theorem \ref{linear main}. But
before we give some immediate conclusions of the definition
of LD structures.   

Observe that if $L \in V \oplus V^*$ is a subspace, then we have
the following simple results from Linear Algebra: 
\begin{eqnarray}\label{dim}
\mbox{dim}(L \cap V)+\mbox{dim}(\rho^*(L))&=&
\mbox{dim}(L) \\
\mbox{dim}(L \cap V^*)+\mbox{dim}(\rho(L))&=&
\mbox{dim}(L).
\end{eqnarray}
So, one can conclude 
\begin{proposition}\label{items}
Let $L$ be a subspace of
$V \oplus V^*$ with $\mbox{dim}(V)=n$,
then the following are satisfied:

{\bf($i$)} If $L$ is a LD structure,
then $\mbox{dim}(L)=n$.

{\bf($ii$)} If $L$ is $n$-dimensional
and $\rho(L)^{\circ} \subset  L \cap V^* $,
then $L$ is a forward LD structure on $V$.

{\bf($iii$)} If $L$ is $n$-dimensional
and $\rho^*(L)^{\circ} \subset  L \cap V $,
then $L$ is a backward LD structure on $V$.
\end{proposition}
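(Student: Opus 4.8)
The plan is to settle all three items by pure dimension counting, using only the two dimension identities displayed immediately above the statement together with the elementary fact that for any subspace $W$ of $V$ (or of $V^*$) its annihilator satisfies $\dim W^{\circ} = n - \dim W$, since $\dim V = \dim V^* = n$.

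For item ($i$) I would suppose first that $L$ satisfies the forward characteristic equation (\ref{characteristic 1}), that is $\rho(L)^{\circ} = L \cap V^*$. Taking dimensions and using $\dim \rho(L)^{\circ} = n - \dim \rho(L)$ gives $\dim(L \cap V^*) = n - \dim \rho(L)$. Substituting this into the second dimension identity $\dim(L \cap V^*) + \dim \rho(L) = \dim L$, the two terms carrying $\rho(L)$ cancel and I obtain $\dim L = n$. The backward case is entirely symmetric: starting from (\ref{characteristic 2}) and feeding the annihilator formula into the first dimension identity yields $\dim L = n$ as well. Since an LD structure satisfies at least one of the two characteristic equations by definition, this proves ($i$).

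For item ($ii$) I would run the same computation in reverse, now starting from the hypothesis $\dim L = n$. The second dimension identity gives $\dim(L \cap V^*) = n - \dim \rho(L)$, while the annihilator formula gives $\dim \rho(L)^{\circ} = n - \dim \rho(L)$; hence $\rho(L)^{\circ}$ and $L \cap V^*$ have the same dimension. Combined with the assumed inclusion $\rho(L)^{\circ} \subset L \cap V^*$, equality of dimensions forces $\rho(L)^{\circ} = L \cap V^*$, which is exactly (\ref{characteristic 1}), so $L$ is a forward LD structure. Item ($iii$) would follow by the identical argument with the roles of $V$ and $V^*$ interchanged, using the first dimension identity and the inclusion $\rho^*(L)^{\circ} \subset L \cap V$.

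There is no real obstacle in this argument; the only points demanding care are bookkeeping ones. I would need to keep track of the space in which each annihilator is taken (the annihilator of a subspace of $V$ lives in $V^*$, and conversely), to pair each characteristic equation with the correct one of the two dimension identities, and to invoke the principle that a subspace contained in another of the same finite dimension must coincide with it. This last principle is precisely what upgrades the inclusions assumed in ($ii$) and ($iii$) to the equalities required in Definition \ref{linear definition}.
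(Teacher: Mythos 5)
Your proof is correct and is exactly the dimension-counting argument the paper intends: the paper displays the two rank--nullity identities and simply says ``one can conclude,'' leaving precisely your computation implicit. Nothing to add.
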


The equations
(\ref{characteristic 1}) and (\ref{characteristic 2}) are called the
\emph{characteristic equations} \cite{Courant90}.
Now the question arises: When does a subspace $L \in V \oplus V^*$
satisfies both of characteristic equations? The following 
proposition gives a partial answer.

First let us recall two bilinear pairings which are of
crucial significance in the theory of
Dirac structures \cite{Courant90}: 
\begin{eqnarray}
 \langle (v_1,\eta_1),(v_2,\eta_2) \rangle_{\mp}
=\frac{1}{2}\, \left(\langle \eta_1 | v_2 \rangle \mp \langle \eta_2 | v_1 \rangle \right)
\end{eqnarray}
for all $(v_1,\eta_1),(v_2,\eta_2)\in V \oplus V^*$.

\begin{proposition}\label{linear lagrangian}
 Let $L$ be a subspace of $V \oplus V^*$. If
$L$ is a Lagrangian space with respect to 
$\ll , \gg_+$ or $\ll , \gg_-$, then $L$ satisfies 
both of the characteristic equations.
\end{proposition}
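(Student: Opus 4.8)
The plan is to derive the two characteristic equations from the Lagrangian hypothesis in two stages: a direct isotropy computation that produces the inclusions $\subseteq$, followed by a dimension count that promotes them to equalities. I would treat the $+$ and $-$ cases simultaneously, because the sign distinguishing $\langle\cdot,\cdot\rangle_+$ from $\langle\cdot,\cdot\rangle_-$ becomes irrelevant the moment one of the two arguments lies entirely in $V$ or in $V^*$. First I would note that both pairings are nondegenerate on $V\oplus V^*$: pairing $(v_1,\eta_1)$ against every $(v_2,0)$ forces $\eta_1=0$, and against every $(0,\eta_2)$ forces $v_1=0$. Consequently a Lagrangian (self-orthogonal) subspace satisfies $L=L^{\perp}$ with $\dim L+\dim L^{\perp}=2n$, so $\dim L=n$.

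The isotropy step is the main computation. Take $(0,\eta)\in L\cap V^{*}$ and an arbitrary $(v_2,\eta_2)\in L$. Isotropy gives $\tfrac12\left(\langle\eta|v_2\rangle \mp \langle\eta_2|0\rangle\right)=0$, that is $\langle\eta|v_2\rangle=0$; since $v_2$ ranges over all of $\rho(L)$ this says $\eta\in\rho(L)^{\circ}$, whence $L\cap V^{*}\subseteq\rho(L)^{\circ}$. Symmetrically, pairing $(v,0)\in L\cap V$ against $(v_2,\eta_2)\in L$ yields $\langle\eta_2|v\rangle=0$ for every $\eta_2\in\rho^{*}(L)$, so $L\cap V\subseteq\rho^{*}(L)^{\circ}$. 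Both inclusions hold verbatim for $\langle\cdot,\cdot\rangle_+$ and $\langle\cdot,\cdot\rangle_-$.

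The dimension count then closes the gap. Using the relations (\ref{dim}) together with $\dim L=n$ and the annihilator identity $\dim W^{\circ}=n-\dim W$, I compute $\dim\rho(L)^{\circ}=n-\dim\rho(L)=\dim(L\cap V^{*})$ and likewise $\dim\rho^{*}(L)^{\circ}=n-\dim\rho^{*}(L)=\dim(L\cap V)$. An inclusion of subspaces of equal dimension is an equality, so $\rho(L)^{\circ}=L\cap V^{*}$ and $\rho^{*}(L)^{\circ}=L\cap V$, which are precisely the characteristic equations (\ref{characteristic 1}) and (\ref{characteristic 2}).

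The only genuine subtlety, and hence the main obstacle, is ensuring that the word ``Lagrangian'' delivers $\dim L=n$ uniformly across the two pairings: for the split-signature symmetric form $\langle\cdot,\cdot\rangle_+$ maximal isotropy forces dimension $n$, while for the symplectic form $\langle\cdot,\cdot\rangle_-$ the Lagrangian condition means the same; the single nondegeneracy argument above dispatches both at once. Everything after that is the routine bookkeeping of annihilator dimensions.
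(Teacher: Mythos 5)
Your proof is correct and follows essentially the same route as the paper's: an isotropy computation yielding the inclusions $L\cap V^{*}\subseteq\rho(L)^{\circ}$ and $L\cap V\subseteq\rho^{*}(L)^{\circ}$, followed by a dimension count using $\dim L=n$ and the relations (\ref{dim}) to upgrade them to equalities. Your only addition is to justify $\dim L=n$ explicitly via nondegeneracy of the two pairings, a detail the paper leaves implicit.
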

\begin{proof}

If $L$ is Lagrangian with respect to $\ll , \gg_+$ or $\ll , \gg_-$
we have 
\begin{eqnarray*}
\langle \rho^*(L) \, | \, \rho(L \cap V) \rangle=
 \pm \, \langle \rho^*(L \cap V) \, | \, \rho(L) \rangle=0,
\end{eqnarray*}
and
\begin{eqnarray*}
\langle \rho^*(L \cap V^*) \, | \, \rho(L) \rangle=
 \pm \, \langle \rho^*(L) \, | \, \rho(L \cap V^*) \rangle=0.
\end{eqnarray*}
So, $L \cap V \subset  \rho^*(L)^{\circ}$ and
$L \cap V^* \subset  \rho(L)^{\circ}$
A dimension count gives the equalities, since $\mbox{dim}(L)=n$. 
\end{proof}

\begin{definition}\label{definition sym skew}
A LD structure is
called a Dirac structure or a symmetric Dirac structure
if it is a Lagrangian subspace with respect to 
$\ll , \gg_+$ or $\ll , \gg_-$, respectively. 
\end{definition}

We give a representation of LD structures,
which is an extension of a representation of Dirac structures \cite{Courant90}.

\begin{theorem}\label{linear ab}
Let $L$ be a LD structure on an $n$-dimensional vector space
$V$, then there exist two linear maps $A:\mathbb{R}^n \rightarrow V$ 
and $B:\mathbb{R}^n \rightarrow V^*$ such that
\begin{eqnarray}
\mbox{ker} \, A \cap \mbox{ker} \, B&=&\{0 \},  \label{eqab}  
\end{eqnarray}
and
\[
 \left(\mbox{Im} \, A\right)^\circ =
  B \left( \mbox{ker} \, A \right) \tag{\theequation a}  \label{eqab1} 
\]
if LD is a backward LD structure and
\[
 (\mbox{Im} \, B)^{\circ} =
  A \left( \mbox{ker} \, B \right) \tag{\theequation b}  \label{eqab2}
\]
otherwise.

Conversely, any structure $L$ on $V$ given by
\begin{equation}
L=\{ (A(y), B(y)); \, y \in \mathbb{R}^n \} \label{eqab3}
\subset V \oplus V^* 
\end{equation}
is a LD structure.
\end{theorem}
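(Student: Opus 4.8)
The plan is to realize $L$ as the image of a single injective map $(A,B)\colon\mathbb{R}^n\to V\oplus V^*$, and then rewrite the characteristic equations (\ref{characteristic 1})--(\ref{characteristic 2}) as identities between the images and kernels of $A$ and $B$. For the forward implication, recall that an LD structure has $\mbox{dim}(L)=n$ by Proposition \ref{items}($i$). I would therefore pick any linear isomorphism $\phi\colon\mathbb{R}^n\to L$ and set $A=\rho\circ\phi$ and $B=\rho^*\circ\phi$, so that $\phi(y)=(A(y),B(y))$ and $L=\{(A(y),B(y))\,;\,y\in\mathbb{R}^n\}$ as in (\ref{eqab3}). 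Injectivity of $\phi$ says precisely that $A(y)=0$ and $B(y)=0$ together force $y=0$, which is the kernel condition (\ref{eqab}).

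The core of the argument is the dictionary
\begin{gather*}
\mbox{Im}\,A=\rho(L), \qquad \mbox{Im}\,B=\rho^*(L), \\
B(\mbox{ker}\,A)=L\cap V^*, \qquad A(\mbox{ker}\,B)=L\cap V.
\end{gather*}
Here the last two equalities are read as subspaces of $V^*$ and $V$: a pair $(0,\eta)$ lies in $L$ exactly when $\eta=B(y)$ for some $y\in\mbox{ker}\,A$, and symmetrically for $L\cap V$. Passing to annihilators turns $\rho(L)^\circ$ into $(\mbox{Im}\,A)^\circ$ and $\rho^*(L)^\circ$ into $(\mbox{Im}\,B)^\circ$, so substituting the dictionary into the characteristic equations shows that $\rho(L)^\circ=L\cap V^*$ is equivalent to $(\mbox{Im}\,A)^\circ=B(\mbox{ker}\,A)$, and $\rho^*(L)^\circ=L\cap V$ to $(\mbox{Im}\,B)^\circ=A(\mbox{ker}\,B)$; these are exactly (\ref{eqab1}) and (\ref{eqab2}). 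Since an LD structure satisfies at least one characteristic equation, the matching identity holds, which finishes the forward implication.

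For the converse I would run the dictionary in reverse. Assuming $A$ and $B$ satisfy (\ref{eqab}) together with one of (\ref{eqab1})/(\ref{eqab2}), the map $y\mapsto(A(y),B(y))$ is injective by (\ref{eqab}), so the set $L$ in (\ref{eqab3}) is an $n$-dimensional subspace and the four identifications above hold verbatim. The assumed identity is then literally the corresponding characteristic equation, so $L$ satisfies (\ref{characteristic 1}) or (\ref{characteristic 2}) and is an LD structure of the matching type.

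I expect the work to be bookkeeping rather than conceptual. The one place that genuinely needs care is keeping the annihilators on the correct side, $(\mbox{Im}\,A)^\circ\subset V^*$ against $(\mbox{Im}\,B)^\circ\subset V$, and thereby matching each characteristic equation with the right representation identity --- this is what fixes the forward/backward labelling. It is also worth stating explicitly that the converse uses both hypotheses: the kernel condition (\ref{eqab}) only secures $\mbox{dim}(L)=n$, whereas one of (\ref{eqab1})/(\ref{eqab2}) is what forces a characteristic equation to hold, so neither can be dropped.
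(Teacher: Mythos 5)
Your argument is essentially the paper's own proof: choose a basis of $L$ (equivalently an isomorphism $\mathbb{R}^n\to L$), set $A=\rho\circ\phi$, $B=\rho^*\circ\phi$, observe $\rho(L)=\mbox{Im}\,A$ and $L\cap V^*=B(\mbox{ker}\,A)$, and translate each characteristic equation into the corresponding image/kernel identity, with the converse run in reverse. Your careful matching of $\rho(L)^{\circ}=L\cap V^*$ (the forward case) with $(\mbox{Im}\,A)^{\circ}=B(\mbox{ker}\,A)$ agrees with the paper's proof and in fact exposes that the forward/backward labels attached to (\ref{eqab1}) and (\ref{eqab2}) in the theorem statement are swapped relative to that proof.
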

\begin{proof}
We only prove what is related to forward LD structures
and the other case is similar.
Let $L$ be a forward LD structure on $V$. If one chooses
a basis for $L$, then this is equivalent to giving two linear maps
$A:\mathbb{R}^n \rightarrow V$ 
and $B:\mathbb{R}^n \rightarrow V^*$ such that
the basis becomes
$(A(e_1),B(e_1)),...,(A(e_n),B(e_n))$,
where $e_1,\dots,e_n$ is the standard basis for $\mathbb{R}^n$.
Since $L$ is $n$-dimensional, (\ref{eqab}) is satisfied. 
Observe that $L \cap V^* =B \left(\mbox{ker} \, A\right)$ and
$\rho(L)=\mbox{Im} \, A $.
Then by the defining property
(\ref{characteristic 1}) of $L$, the relation (\ref{eqab1}) is satisfied.

Conversely, assume that $L$ is given by (\ref{eqab3}),
then by (\ref{eqab}) it is $n$-dimensional, and (\ref{eqab1}) implies
(\ref{characteristic 1}) which concludes the proof.
\end{proof}

Next we give another representation of
LD structures which gives
an equivalent picture of the notion of
LD structures. 

\begin{theorem}\label{linear main}
{\bf($i$)} A forward LD structure on $V$ 
can be given by a pair $(E,\Omega)$ where $E \subset V$ is a subspace
and $\Omega: E \rightarrow E^*$ is a linear map.

{\bf($ii$)} A backward LD structure on $V$ 
can be given by a pair $(F,\Pi)$ where $F \subset V^*$ is a subspace
and $\Pi: F \rightarrow F^*$ is a linear map.
\end{theorem}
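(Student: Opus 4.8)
The plan is to realize the pair $(E,\Omega)$ as the projection of $L$ onto $V$ together with the linear map obtained by reading off the covector part of $L$, and to show that the characteristic equation is precisely what makes this map well defined. I will treat case (\textbf{$i$}) in detail; case (\textbf{$ii$}) then follows by the symmetric argument after identifying $V^{**}$ with $V$.

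First I would set $E := \rho(L) \subset V$. Given $v \in E$, choose any $\eta \in V^*$ with $(v,\eta) \in L$ and define $\Omega(v) := \eta|_E \in E^*$, the restriction of $\eta$ to $E$. The crucial step is well-definedness: if $(v,\eta_1),(v,\eta_2) \in L$, then $(0,\eta_1-\eta_2) \in L$, so $\eta_1 - \eta_2 \in L \cap V^*$. By the forward characteristic equation (\ref{characteristic 1}), $L \cap V^* = \rho(L)^{\circ} = E^{\circ}$, hence $(\eta_1-\eta_2)|_E = 0$ and $\Omega(v)$ is independent of the chosen lift. Linearity of $\Omega$ is inherited from the fact that $L$ is a subspace. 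This is the heart of the proof, and it is exactly here that the defining property of a forward LD structure is used; the adjective \emph{forward} reflects that the induced map runs from the subspace $E$ of $V$ to its dual $E^*$, mirroring a map $V \rightarrow V^*$.

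For the converse, given a subspace $E \subset V$ and a linear map $\Omega : E \rightarrow E^*$, I would define
\[
L := \{ (v,\eta) \in V \oplus V^* : v \in E, \ \eta|_E = \Omega(v) \}.
\]
Then $\rho(L) = E$, since every element of $E^*$ extends to some $\eta \in V^*$, and $L \cap V^* = \{ \eta \in V^* : \eta|_E = 0 \} = E^{\circ} = \rho(L)^{\circ}$, so $L$ is a forward LD structure. The fibres over $E$ are affine translates of $E^{\circ}$, giving $\mbox{dim}(L) = \mbox{dim}(E) + (n - \mbox{dim}(E)) = n$, consistent with Proposition \ref{items}. Finally I would observe that the two assignments $L \mapsto (E,\Omega)$ and $(E,\Omega) \mapsto L$ are mutually inverse, which is immediate from the definitions, thereby establishing the claimed correspondence.

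For (\textbf{$ii$}) the same scheme applies with the roles of $V$ and $V^*$ exchanged: set $F := \rho^*(L) \subset V^*$ and, for $\eta \in F$, pick $v$ with $(v,\eta) \in L$ and let $\Pi(\eta) := v|_F \in F^*$, where $v$ is regarded as an element of $V^{**}$ and restricted to $F$. Well-definedness now rests on the backward characteristic equation (\ref{characteristic 2}), since a difference of lifts lies in $L \cap V = \rho^*(L)^{\circ} = F^{\circ}$, the annihilator of $F$ taken in $V^{**} \cong V$. The remaining verifications repeat those of case (\textbf{$i$}) verbatim. I expect the only genuinely delicate point in the whole argument to be the well-definedness of $\Omega$ (resp. $\Pi$); everything else amounts to bookkeeping with dimensions and the canonical identification $V^{**} \cong V$.
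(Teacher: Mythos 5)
Your proposal is correct and follows essentially the same route as the paper: in both directions you identify $E=\rho(L)$, define $\Omega$ by restricting the covector component to $E$, use the characteristic equation $L\cap V^{*}=\rho(L)^{\circ}$ for well-definedness, and conversely build $L$ as the set of pairs $(v,\eta)$ with $v\in E$ and $\eta|_{E}=\Omega(v)$ (the paper writes this as $\eta-\Omega(v)\in E^{\circ}$). Your added remarks on the dimension count and on the two assignments being mutually inverse are harmless refinements of the same argument.
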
 

\begin{proof}

Only $(i)$ part of the Theorem will be proved,
the other part can be proved with a similar reasoning.

For a given pair $(E,\Omega)$ define $L\in V \oplus V^*$ by
\begin{equation}
L = \{ (v,\eta) ; \, v\in E, \, \eta - \Omega(v) \in E^{\circ} \}.
\end{equation}
It is clear that 
$\rho(L)=E$ and 
\begin{equation}
L \cap V^* = \{ \eta ; \, (0,\eta) \in L \}= \{ \eta ; \,  \eta - \Omega(0)=\eta \in E^{\circ}  \}=E^{\circ}.
\end{equation}
Then one concludes Equation \ref{characteristic 1} which
means that $L$ is a forward LD structure.

Conversely, for a given forward LD structure $L$ set $\rho(L)=E$.
Then a linear map $\Omega:E \rightarrow E^*$
can be defined for all $x \in L$ by $\Omega(\rho(x)):=\rho^*(x)|_E$. 
To show that it is well-defined,
consider vectors $x=(v,\eta),x'=(v,\eta') \in L$.
We need to show that $\eta|_E=\eta'|_E$. 
It is clear that $(0,\eta-\eta') \in L$ which implies that
$ \eta-\eta' \in L \cap V^*$. By the condition (\ref{characteristic 1}), this
is equivalent to saying that $(\eta-\eta')|_E=0$
or $\eta|_E=\eta'|_E$, as desired.
\end{proof}

Theorem \ref{linear main} makes clear where the naming `forward' and `backward'
LD structures come from.

Next we have a closer look at
the structures of the linear maps $\Omega$
and $\Pi$, so we will be
more clear about the motivation of
the definition of LD structures.
But first note that the kernel of 
$\Omega$ (resp. $\Pi$) is $L \cap V$
(resp. $L \cap V^*$). 

Let $\Omega^T:E \rightarrow E^*$ be the adjoint map of $\Omega$, i.e.
\begin{equation}
\langle \Omega^T(v_1) | v_2 \rangle := \langle \Omega^T(v_2) | v_1 \rangle
\end{equation}
for all $v_1,v_2 \in E$. Then one can define
a symmetric linear map $\Omega^+:E \rightarrow E^*$
and a skew-symmetric linear map $\Omega^-:E \rightarrow E^*$ by
\begin{eqnarray}
\langle \Omega^+(v_1)| v_2 \rangle:=  \frac{1}{2}\,
\left(\langle \Omega(v_1) | v_2 \rangle+ \langle 
\Omega^T(v_1) | v_2 \rangle \right),\\
\langle \Omega^-(v_1)| v_2 \rangle:=  \frac{1}{2}\,
\left(\langle \Omega(v_1) | v_2 \rangle
 - \langle \Omega^T(v_1) | v_2 \rangle \right),
\end{eqnarray}
respectively.
This allows the unique decomposition
\begin{equation}
 \Omega = \Omega ^+ +\Omega^-
\end{equation}
which will be of great importance in the sequel.
It is also possible to define the unique decomposition
of $\Pi$ into symmetric and skew-symmetric parts:
\begin{equation}
 \Pi = \Pi ^+ +\Pi^-.
\end{equation}

If $L$ is a LD structure then
\begin{eqnarray}
 \langle (v_1,\eta_1),(v_2,\eta_2) \rangle_{+}
=\langle \Omega^+(v_1) | v_2 \rangle 
\end{eqnarray}
and 
\begin{eqnarray}
 \langle (v_1,\eta_1),(v_2,\eta_2) \rangle_{-}
=\langle \Omega^-(v_1) | v_2 \rangle 
\end{eqnarray}
for all $(v_1,\eta_1),(v_2,\eta_2)\in L$. Therefore the following
is concluded.

\begin{corollary}
A LD structure $L$ is a Dirac structure (resp. symmetric Dirac structure)
if and only if the corresponding linear map $\Omega$ given in Theorem \ref{linear main} 
is purely skew-symmetric (resp. symmetric).
\end{corollary}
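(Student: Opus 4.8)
The plan is to reduce the corollary to the two identities displayed immediately above it, which express the restrictions to $L$ of the pairings $\langle\,,\,\rangle_{+}$ and $\langle\,,\,\rangle_{-}$ in terms of the symmetric and skew-symmetric parts $\Omega^{+}$ and $\Omega^{-}$ of $\Omega$. First I would recall that, by Proposition \ref{items}$(i)$, every LD structure satisfies $\dim L = n$. Both pairings are nondegenerate on the $2n$-dimensional space $V \oplus V^{*}$: the symmetric pairing $\langle\,,\,\rangle_{+}$ has split signature $(n,n)$, while the skew pairing $\langle\,,\,\rangle_{-}$ is symplectic. In either case a maximal isotropic (Lagrangian) subspace has dimension $n$, so an $n$-dimensional subspace is Lagrangian precisely when it is isotropic. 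Hence $L$ is a Dirac structure (resp. a symmetric Dirac structure) if and only if $\langle\,,\,\rangle_{+}$ (resp. $\langle\,,\,\rangle_{-}$) vanishes on every pair of elements of $L$.

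Next I would feed the two identities $\langle (v_1,\eta_1),(v_2,\eta_2)\rangle_{+} = \langle \Omega^{+}(v_1) \,|\, v_2 \rangle$ and $\langle (v_1,\eta_1),(v_2,\eta_2)\rangle_{-} = \langle \Omega^{-}(v_1) \,|\, v_2 \rangle$ into this criterion, where $v_1,v_2$ range over $E = \rho(L)$. Thus $L$ is isotropic with respect to $\langle\,,\,\rangle_{+}$ if and only if $\langle \Omega^{+}(v_1) \,|\, v_2 \rangle = 0$ for all $v_1,v_2 \in E$. Since $\Omega^{+}(v_1)$ lies in $E^{*}$ and a functional annihilating all of $E$ is zero, this is equivalent to $\Omega^{+} = 0$, that is to $\Omega = \Omega^{-}$ being purely skew-symmetric. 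The symmetric Dirac case is strictly parallel: isotropy with respect to $\langle\,,\,\rangle_{-}$ is equivalent to $\Omega^{-} = 0$, i.e. to $\Omega = \Omega^{+}$ being symmetric.

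Both directions of each equivalence are obtained by reading the same chain of ``if and only if'' statements forwards and backwards, so no separate converse is required. I do not expect a genuine obstacle here: the corollary is essentially a restatement of the identities preceding it. The only two points that deserve a word of care are the passage from ``the functional $\Omega^{+}(v_1)$ kills $E$'' to ``$\Omega^{+}(v_1)=0$'', and the claim that a half-dimensional isotropic subspace is automatically maximal isotropic; the latter rests precisely on the nondegeneracy together with the signature count for $\langle\,,\,\rangle_{+}$ and the symplectic dimension count for $\langle\,,\,\rangle_{-}$.
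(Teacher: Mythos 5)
Your proof is correct and takes essentially the same route as the paper, which offers no separate argument but simply deduces the corollary from the two displayed identities $\langle\,\cdot\,,\cdot\,\rangle_{\pm}|_{L}=\langle \Omega^{\pm}(\cdot)\,|\,\cdot\,\rangle$ on $E=\rho(L)$. You are in fact slightly more careful than the paper in recording the two points it leaves implicit: that an $n$-dimensional isotropic subspace is automatically Lagrangian for either pairing, and that a functional in $E^{*}$ annihilating all of $E$ is zero, so isotropy forces $\Omega^{+}=0$ (resp.\ $\Omega^{-}=0$).
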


\begin{remark}
The converse of the result above is not generally true, that is,
(\ref{characteristic 1}) and (\ref{characteristic 2}) are not sufficient for $\Omega$ (or $\Pi$)
to be symmetric or skew-symmetric. For instance, if $\Omega$ is an
isomorphism between $V$ and $V^*$ then the characteristic equations are
satisfied. Because, in this case 
$\rho^*(L)=V^*$ and $L \cap V=\{0\}$.
\end{remark}

We can further conclude the following result.
It was originally given for Dirac structures in \cite{Courant90}, and
for LD structures the result was used in \cite{Blankenstein03} without proof. 

\begin{proposition}\label{prop_inner}
Let $L\in V \oplus V^*$ be a LD structure on $V$.

$(i)$ If $L$ is a forward LD structure then $L$ is
maximally isotropic with respect to some
inner product $\ll,\gg$ of split sign and of the form 
\begin{equation}\label{bilinear 1}
\ll  (v_1,\eta_1), (v_2,\eta_2)  \gg \, =  \langle  \eta_1 |  v_2 \rangle +
\langle  \eta_2 |  v_1  \rangle 
 - 2\, \Psi(v_1,v_2) , 
\end{equation}
for all $(v_1,\eta_1),(v_2,\eta_2) \in V \oplus V^*$,
where $\Psi$ is a symmetric bilinear form on $V$.

$(ii)$ If $L$ is a backward LD structure then $L$ is
maximally isotropic with respect to some
inner product $\ll,\gg$ of split sign and of the form
\begin{equation}\label{bilinear 2}
\ll  (v_1,\eta_1), (v_2,\eta_2)  \gg \, =  \langle  \eta_1 |  v_2 \rangle +
\langle  \eta_2 |  v_1  \rangle 
 - 2\, \Phi(\eta_1,\eta_2) , 
\end{equation}
for all $(v_1,\eta_1),(v_2,\eta_2) \in V \oplus V^*$,
where $\Phi$ is a symmetric bilinear form on $V^*$.
\end{proposition}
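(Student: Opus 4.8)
The plan is to reduce everything to the representation of $L$ supplied by Theorem~\ref{linear main} together with the decomposition $\Omega=\Omega^++\Omega^-$ set up immediately before the statement. I would prove $(i)$ in full and obtain $(ii)$ by the dual argument, interchanging the roles of $V$ and $V^*$. So let $L$ be a forward LD structure, put $E:=\rho(L)$, and take $\Omega:E\rightarrow E^*$ as in Theorem~\ref{linear main}, so that $L=\{(v,\eta):v\in E,\ \eta-\Omega(v)\in E^{\circ}\}$.

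The first step is to read off the correct $\Psi$. Using the identity $\langle (v_1,\eta_1),(v_2,\eta_2)\rangle_{+}=\langle\Omega^+(v_1)|v_2\rangle$ valid on $L$ (recorded just above the statement), the isotropy requirement $\ll(v_1,\eta_1),(v_2,\eta_2)\gg=0$ for elements of $L$ reads $2\langle\Omega^+(v_1)|v_2\rangle-2\Psi(v_1,v_2)=0$. This forces $\Psi(v_1,v_2)=\langle\Omega^+(v_1)|v_2\rangle$ on $E\times E$. Since $\Omega^+$ is symmetric, this is a genuine symmetric bilinear form on $E$, which I would extend to a symmetric bilinear form $\Psi$ on all of $V$ (for instance by choosing a complement of $E$ in $V$ and declaring $\Psi$ to vanish there and on the cross terms). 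With this choice $L$ is isotropic by construction.

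The remaining work is to verify that $\ll,\gg$ is nondegenerate of split sign and then to upgrade ``isotropic'' to ``maximally isotropic''. Nondegeneracy is immediate: if $(v_1,\eta_1)$ is $\ll,\gg$-orthogonal to everything, then varying $\eta_2$ at $v_2=0$ gives $v_1=0$, after which varying $v_2$ gives $\eta_1=0$. For the signature -- the one step I expect to be nonroutine -- the idea is the invertible substitution $(v,\eta)\mapsto(v,\eta-\Psi^\flat(v))$, where $\Psi^\flat:V\rightarrow V^*$ is the map associated to $\Psi$; under it the quadratic form $2\langle\eta|v\rangle-2\Psi(v,v)$ becomes the standard form $2\langle\tilde\eta|v\rangle$, whose Gram matrix $\bigl(\begin{smallmatrix}0&I\\ I&0\end{smallmatrix}\bigr)$ has signature $(n,n)$. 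By Sylvester's law of inertia $\ll,\gg$ therefore has split sign.

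Finally, a totally isotropic subspace for a nondegenerate form of signature $(n,n)$ on the $2n$-dimensional space $V\oplus V^*$ has dimension at most $n$, and any isotropic subspace of dimension $n$ is maximal. Since Proposition~\ref{items}$(i)$ gives $\dim L=n$ and $L$ is isotropic, $L$ is maximally isotropic, proving $(i)$. Part $(ii)$ follows by the same reasoning after replacing $\Omega^+$ by $\Pi^+$, $E$ by $F=\rho^*(L)$, and $\Psi$ by the symmetric form $\Phi$ on $V^*$, using the dual substitution $(v,\eta)\mapsto(v-\Phi^\sharp(\eta),\eta)$ for the signature computation.
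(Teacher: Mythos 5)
Your proof is correct and follows essentially the same route as the paper: represent $L$ by the pair $(E,\Omega)$, extend and symmetrize to obtain $\Psi$, verify isotropy, and exhibit the split signature (your unipotent substitution $(v,\eta)\mapsto(v,\eta-\Psi^{\flat}(v))$ is just a coordinate-free version of the paper's explicit hyperbolic basis $(0,\beta_i)$, $(\alpha_i,\Omega(\alpha_i))$). If anything, your write-up is slightly more complete, since you make explicit the final step that an $n$-dimensional isotropic subspace of a signature-$(n,n)$ form is automatically maximal, which the paper leaves implicit.
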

\begin{proof}
Only $(i)$ is proved as the proof of ($ii$) is completely analogous. 
We know by Theorem \ref{linear main} $(i)$
that $L$ corresponds to a pair
$(E,\Omega)$ where $E \in V$ is a subspace
and $\omega:E \rightarrow E^*$ is a linear map.
Observe that $\Omega$ can be extended to whole $V$ which is also
denoted by $\Omega$. 
This gives a symmetric bilinear form $\Psi$ on $V$ defined by
\begin{equation}
\Psi(v_1,v_2):= \frac{1}{2} \,
 \left( \langle \Omega(v_1) | v_2  \rangle +
   \langle \Omega(v_2)| v_1 \rangle \right).
\end{equation}
(We note here that
the extension of $\Omega$ is not unique so the
the symmetric bilinear form is not uniquely
defined, but this does not change the result.) 
Then it is straight forward to show that $L$ is isotropic
with respect to the symmetric bilinear form in (\ref{bilinear 1}).
It remains to show that (\ref{bilinear 1}) is an inner product of
split sign.   
After choosing a proper basis for $V \oplus V^*$, the result will be clear.
Let  $\alpha_1,...,\alpha_n$ be a basis of $V$ and 
$\beta_1,...,\beta_n$ be a basis of $V^*$ such that
$\langle  \beta_i \, | \,  \alpha_j  \rangle = \delta _i^j, \, i,j=1,...,n,$ where
$\delta$ is the Kronecker symbol.
As a basis of $V \oplus V^*$ one can choose $ (0,\beta_1),...,(0,\beta_n)
,(\alpha_1,\Omega(\alpha_1)),...,(\alpha_n,\Omega(\alpha_n)) ,$ then 
the matrix associated to the bilinear form in (\ref{bilinear 1}) becomes 
\[ \left( \begin{array}{cc}
{O}_n & I_n \\
I_n & O_n \\ \end{array} \right), \]
where $O_n$ is the $n \times n$ zero matrix and ${I}_n$
is the $n \times n$ identity matrix.
Accordingly, the basis given by
\begin{eqnarray*}
y_i&=&\frac{\sqrt{2}}{2}\, \left[ (0,\beta_i)+(\alpha_i,\Omega(\alpha_i)) \right], \\
x_i&=&\frac{\sqrt{2}}{2}\, \left[ (0,\beta_i)-(\alpha_i,\Omega(\alpha_i)) \right]
\end{eqnarray*}
gives the diagonal form
\[ \left( \begin{array}{cc}
 {I}_n  & O_n  \\ 
O_n & - {I}_n \\ \end{array} \right). \]
Then it is concluded that the bilinear form in (\ref{bilinear 1})
has signature $(n,n)$ with $n=\mbox{dim}(V)$. This concludes the proof.
\end{proof}

We can deduce from the proof of Proposition \ref{prop_inner}
that LD structures can be defined as deformations
of Dirac structures as follows.
Let $\mbox{Symm}(V)$ and $\mbox{Symm}(V^*)$ be the additive groups of
symmetric bilinear forms on $V$ and $V^*$ respectively,
and let $\mbox{Dir}(V)$, $\mbox{FLD}(V)$ and $\mbox{BLD}(V)$ denote
the spaces of Dirac structures, forward LD structures and
backward LD structures on $V$ respectively. Then we have

\begin{corollary}\label{deformation cor}
With the notation above,
one has the following inclusions:

 $(i)$ \begin{equation}
  \mbox{FLD}(V)  \hookrightarrow  \mbox{Symm}(V) \times \mbox{Dir}(V)
  \end{equation}

 $(ii)$ \begin{equation}
   \mbox{BLD}(V) \hookrightarrow \mbox{Symm}(V^*) \times \mbox{Dir}(V).
  \end{equation}
\end{corollary}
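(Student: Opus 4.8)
The plan is to turn the decomposition $\Omega=\Omega^++\Omega^-$ established above into an injective assignment. By Theorem \ref{linear main}$(i)$ a forward LD structure $L$ is the same datum as a pair $(E,\Omega)$ with $E=\rho(L)\subset V$ and $\Omega:E\to E^*$, and the splitting $\Omega=\Omega^++\Omega^-$ into its symmetric and skew-symmetric parts is unique. First I would send the skew-symmetric part to a Dirac structure: the pair $(E,\Omega^-)$ defines, again via Theorem \ref{linear main}$(i)$, a forward LD structure $D$, and since $\Omega^-$ is purely skew-symmetric the corollary following Theorem \ref{linear main} guarantees that $D$ is a genuine Dirac structure, i.e. $D\in\mbox{Dir}(V)$. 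Note that $E$ is recoverable from $D$ as $E=\rho(D)$, so no information about the subspace is lost in this step.

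Next I would record the symmetric part as an element of $\mbox{Symm}(V)$. The map $\Omega^+:E\to E^*$ is exactly a symmetric bilinear form on $E$, and, just as in the proof of Proposition \ref{prop_inner}, it extends to a symmetric bilinear form $\Psi$ on all of $V$. To make the assignment well defined in spite of the non-uniqueness of the extension noted there, I would fix once and for all a background inner product $g$ on $V$ and extend by declaring $\Psi$ to vanish on the $g$-orthogonal complement of $E$; this produces a $\Psi\in\mbox{Symm}(V)$ with $\Psi|_{E\times E}=\Omega^+$. The assignment is then $L\mapsto(\Psi,D)$, giving a map $\mbox{FLD}(V)\to\mbox{Symm}(V)\times\mbox{Dir}(V)$.

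It remains to check injectivity, which is where the recovery of $L$ from $(\Psi,D)$ comes in. Suppose $L_1,L_2\in\mbox{FLD}(V)$ have the same image $(\Psi,D)$. From $D$ one reads off $E=\rho(D)$ together with the skew-symmetric map $\Omega^-$, so the subspaces and the skew parts $\Omega^-_1=\Omega^-_2$ agree; from $\Psi$ one reads off $\Omega^+=\Psi|_{E\times E}$, so $\Omega^+_1=\Omega^+_2$. Hence $\Omega_1=\Omega^+_1+\Omega^-_1=\Omega_2$, and since a forward LD structure is determined by its pair $(E,\Omega)$ we conclude $L_1=L_2$. This establishes $(i)$; for $(ii)$ I would run the identical argument on $V^*$, using Theorem \ref{linear main}$(ii)$, the pair $(F,\Pi)$ with $F=\rho^*(L)\subset V^*$ and the splitting $\Pi=\Pi^++\Pi^-$, and a fixed background inner product on $V^*$, so that $\Pi^-$ yields the Dirac structure in $\mbox{Dir}(V)$ while $\Pi^+$ extends to an element of $\mbox{Symm}(V^*)$.

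The main obstacle is precisely the well-definedness issue flagged inside Proposition \ref{prop_inner}: the extension of the symmetric part from $E$ to $V$ is not canonical, so some global choice (the auxiliary inner product $g$, or equivalently a fixed rule assigning a complement to each subspace) must be made before the map $\mbox{FLD}(V)\to\mbox{Symm}(V)\times\mbox{Dir}(V)$ is even a function. Once such a choice is fixed the injectivity is immediate, since only $\Psi|_{E\times E}$ enters the reconstruction of $\Omega$; the content of the corollary is therefore just that this description of an LD structure as a Dirac structure deformed by a symmetric form is faithful.
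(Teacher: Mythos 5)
Your proof is correct and rests on the same two ingredients as the paper's: the identification of a forward LD structure with a pair $(E,\Omega)$ from Theorem \ref{linear main}, and the unique splitting $\Omega=\Omega^{+}+\Omega^{-}$, with $(E,\Omega^{-})$ supplying the Dirac structure and $\Omega^{+}$ the symmetric form. The difference is in direction and in how the non-canonical extension of $\Omega^{+}$ from $E$ to $V$ is handled: the paper constructs the surjection $\tau:\mbox{Symm}(V)\times\mbox{Dir}(V)\to\mbox{FLD}(V)$, $\tau(\psi,L)=\{(v,\eta+\psi(v)):(v,\eta)\in L\}$, and then quotients by the equivalence relation ``same Dirac structure and same restriction of $\psi$ to $\rho(L)$,'' identifying $\mbox{FLD}(V)$ with $\bigl(\mbox{Symm}(V)\times\mbox{Dir}(V)\bigr)/\!\sim$; an actual injection into the product is then only obtained after choosing a section of $\tau$. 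You build that section explicitly by fixing a background inner product and extending $\Omega^{+}$ by zero on the orthogonal complement, which makes the map $L\mapsto(\Psi,D)$ a genuine function and renders injectivity immediate from the reconstruction $\Omega=\Psi|_{E\times E}+\Omega^{-}$. Your version is, if anything, the more literal proof of the stated inclusion $\hookrightarrow$, at the cost of an auxiliary choice; the paper's quotient formulation is choice-free but delivers the embedding only implicitly.
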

\begin{proof}
$(i)$ We show that $\mbox{FLD}(V)$ can be identified with
a subspace of $\mbox{Symm}(V) \times \mbox{Dir}(V)$. 
 Consider the map 
\begin{equation}
 \tau:\mbox{Symm}(V) \times \mbox{Dir}(V) \rightarrow \mbox{FLD}(V)
\end{equation}
defined by
\begin{equation}
 \tau(\psi,L)=\{(v,\eta+\psi(v)); \, (v,\eta) \in L\}.
\end{equation}
It can be shown that $\tau$ is surjective.
In fact, every forward LD structure $L$
has a representation $(E,\Omega)$
and $\Omega$ can be extended to $V$.
Further more $\Omega$
can be split into
$\Omega=\Omega^++\Omega^-$.
Then we have 
\begin{equation}
L=\{(v,\eta+\psi^+(v)); \, (v,\eta) \in L_1\},
\end{equation}
where $L_1$ is the Dirac structure given by $(E,\Omega^-)$.
  
Define a relation ``$\sim$`` on $\mbox{Symm}(V) \times \mbox{Dir}(V)$
by 
\begin{equation}
(\psi_1,L_1) \sim (\psi_2,L_2) \Leftrightarrow
 L_1=L_2 \, \mbox{and} \, 
\psi_1|_{\rho(L_1)}=\psi_1|_{\rho(L_1)}
\end{equation}
 which can be shown to be an equivalence relation.
Therefore we have the identification
 \begin{equation}
  \mbox{FLD}(V)  \approx  \mbox{Symm}(V) \times \mbox{Dir}(V) / \sim.
  \end{equation}

$(ii)$  Considering the map 
\begin{equation}
\nu:\mbox{Symm}(V^*) \times \mbox{Dir}(V) \rightarrow \mbox{BLD}(V)
\end{equation}
defined by
\begin{equation}
 \nu(\phi,L)=\{(v+\phi(\eta),\eta); \, (v,\eta) \in L\}
\end{equation}
gives the conclusion.
\end{proof}

The idea behind Corollary \ref{deformation cor} is
gauge equivalance of Dirac structures \cite{BursztynRadko03}
in which case the Dirac structures are deformed by skew-symmetric
bilinear maps.

Now we address
to the question: When a symmetric Dirac structure is also a Dirac structure?
But before we
recall the definition of a separable Dirac structures
\cite{vdSchaftMaschke12}, a notion which appears as a
generalization of Tellegen's theorem in circuit theory.
A Dirac structure $L \subset V \oplus V^*$
is a \emph{separable Dirac structure} if 
\begin{equation}\label{separable}
\langle \eta_1 | v_2 \rangle= 0 ,
\end{equation}
for all $(v_1,\eta_1), (v_2, \eta_2) \in L$.
It is ahon in \cite{vdSchaftMaschke12} that a subspace
$L \in V \oplus V^*$ is a seperable Dirac structure
if and only if
\begin{equation}
 L=K \oplus K^{\circ}
\end{equation}
for some subspace $K \in F$.

We then have
\begin{proposition}
A subspace $L  \subset V \oplus V^*$ is both a Dirac and a symmetric Dirac structure
if and only if  it is a separable Dirac structure.
\end{proposition}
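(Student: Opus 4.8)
The plan is to pass through the linear map $\Omega$ attached to $L$ by Theorem \ref{linear main} and reduce the two-sided statement to the single algebraic condition $\Omega=0$, which I can then match against the structural description $L=K\oplus K^{\circ}$ of separable Dirac structures.

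First I would observe that each of the two properties under consideration entails that $L$ is in particular a Dirac structure, hence a Lagrangian subspace for $\ll,\gg_+$; by Proposition \ref{linear lagrangian} it then satisfies both characteristic equations and so admits a representation $(E,\Omega)$ with $E=\rho(L)$ as in Theorem \ref{linear main}(i). By the Corollary relating Dirac-type structures to the symmetry of $\Omega$, the subspace $L$ is a Dirac structure exactly when $\Omega$ is skew-symmetric and a symmetric Dirac structure exactly when $\Omega$ is symmetric. A real linear map $\Omega:E\to E^{*}$ that is simultaneously symmetric and skew-symmetric vanishes, so $L$ is both a Dirac and a symmetric Dirac structure if and only if $\Omega=0$.

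Next I would read off the shape of $L$ in this case. From the construction in the proof of Theorem \ref{linear main}(i) we have $L=\{(v,\eta):v\in E,\ \eta-\Omega(v)\in E^{\circ}\}$, which for $\Omega=0$ collapses to $L=\{(v,\eta):v\in E,\ \eta\in E^{\circ}\}=E\oplus E^{\circ}$; conversely $L=E\oplus E^{\circ}$ forces $\Omega=0$, since then $\rho^{*}(x)|_{E}=0$ for every $x\in L$. Thus the conjunction ``Dirac and symmetric Dirac'' is equivalent to $L=E\oplus E^{\circ}$ with $E=\rho(L)$. Invoking the cited equivalence that $L$ is a separable Dirac structure if and only if $L=K\oplus K^{\circ}$ for some subspace $K$, and taking $K=E$, I would chain the two equivalences to conclude the claim. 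As a sanity check I would verify the defining identity (\ref{separable}) directly on $L=E\oplus E^{\circ}$: for $(v_1,\eta_1),(v_2,\eta_2)\in L$ one has $\eta_1\in E^{\circ}$ and $v_2\in E$, whence $\langle\eta_1|v_2\rangle=0$.

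I do not expect a genuine obstacle here: the argument rests entirely on the already-established dictionary between $L$ and $\Omega$ and on the triviality that a real bilinear form which is at once symmetric and skew must vanish. The only point needing care is the very first one—ensuring that $L$ actually carries an $(E,\Omega)$-representation before one may speak of the symmetry type of $\Omega$—which is why I invoke Proposition \ref{linear lagrangian} at the outset to guarantee that $L$ is a bona fide LD structure.
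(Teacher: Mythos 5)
Your argument is correct, but it takes a genuinely different route from the paper's. The paper proves the forward implication by the bare computation of adding the two isotropy identities $\langle\eta_1|v_2\rangle+\langle\eta_2|v_1\rangle=0$ and $\langle\eta_1|v_2\rangle-\langle\eta_2|v_1\rangle=0$ to get $\langle\eta_1|v_2\rangle=0$, and the converse by observing that separability trivially forces the symmetric Dirac condition (the Dirac condition being part of the definition of a separable Dirac structure); no representation theory is invoked. You instead pass through the $(E,\Omega)$ dictionary of Theorem \ref{linear main} and the Corollary, reduce the conjunction to $\Omega^+=\Omega^-=0$, i.e.\ $\Omega=0$, and then identify $L=\rho(L)\oplus\rho(L)^{\circ}$, matching the $K\oplus K^{\circ}$ characterization of separable Dirac structures. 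The paper's proof is shorter and self-contained; yours is longer but buys the explicit normal form $L=E\oplus E^{\circ}$ as a byproduct and makes the link to the cited structural description of separability explicit (and your direct check of $\langle\eta_1|v_2\rangle=0$ on $E\oplus E^{\circ}$ even makes that citation dispensable in one direction). One wording caveat: your opening claim that ``each of the two properties entails that $L$ is in particular a Dirac structure, hence Lagrangian for $\ll,\gg_+$'' is not literally right --- a symmetric Dirac structure is Lagrangian for $\ll,\gg_-$ and need not be a Dirac structure --- but this is harmless here, since Proposition \ref{linear lagrangian} covers both pairings and in both directions of the equivalence $L$ is in any case a Dirac structure, so the $(E,\Omega)$ representation you need is available.
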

\begin{proof}
$L$ is a Dirac and a symmetric Dirac structure, then
\begin{equation}\label{diraccon}
\langle \eta_1 | v_2 \rangle+\langle \eta_2 | v_1 \rangle=0 ,
\end{equation}
and
\begin{equation}\label{bdcon}
\langle \eta_1 | v_2 \rangle-\langle \eta_2 | v_1 \rangle=0
\end{equation}
for all $(v_1,\eta_1), (v_2, \eta_2) \in L$, respectively.
Then summing these equations gives Equation \ref{separable}.

Conversely, if $L$ is a separable Dirac structure,
it is easily seen by Equation (\ref{separable})
that the symmetric Dirac condition (\ref{bdcon}) is satisfied trivially.
\end{proof}

\section{Smooth Leibniz-Dirac structures}
\label{smooth LD str}

Let $M$ be a $n$-dimensional smooth manifold. Consider a smooth subbundle $L$ of the Pontryagin bundle
$TM \oplus T^*M$. We denote by the projections from $TM \oplus T^*M$ onto
$TM$ and $T^*M$ by $\rho$ and $\rho^*$, respectively.
Definition \ref{linear definition} can be given on a manifold as the following.
\begin{definition}
Let $L$ be a smooth vector subbundle
of $TM \oplus T^*M$. Then $L$  
 is called a Leibniz-Dirac structure (LD structure) if either of the following
equations holds:
\begin{eqnarray}
\rho(L)^{\circ}&=&L \cap T^*M \label{characteristic smooth 1}  \\
\rho^*(L)^{\circ}&=& L \cap TM . \label{characteristic smooth 2}
\end{eqnarray} 
LD structures satisfying (\ref{characteristic smooth 1}) 
are called forward LD structures, accordingly the ones satisfying
(\ref{characteristic smooth 2}) are called backward LD structures.
\end{definition}

\begin{remark}
 The equations 
(\ref{characteristic smooth 1}) and (\ref{characteristic smooth 2})
are not bundle equations, in general. 
However, (\ref{characteristic smooth 1}) implies 
\begin{equation}
 \rho(L) \subset(L \cap T^*M)^{\circ}
\end{equation}
and (\ref{characteristic smooth 2}) implies 
\begin{equation}
 \rho^*(L) \subset(L \cap TM)^{\circ}
\end{equation}
with the equality if the relations are bundle relations.
\end{remark}

By considering the preceding remark we
have the following which is  similar to the linear case.

\begin{proposition}\label{items smooth}
Let $L$ be a subbundle of
$TM \oplus T^*M$ with $\mbox{dim}(M)=n$,
then the following are satisfied:

{\bf($i$)} If $L$ is a LD structure,
then $\mbox{rank}(L)=n$.

{\bf($ii$)} If the rank of $L$ is equal to $n$
and $\rho(L)^{\circ} \subset  L \cap T^*M $ is
satisfied as a bundle equation,
then $L$ is a forward LD structure on $M$.

{\bf($iii$)} If the rank of $L$ is equal to $n$
and $\rho^*(L)^{\circ} \subset  L \cap TM $ is
satisfied as a bundle equation,
then $L$ is a backward LD structure on $M$.
\end{proposition}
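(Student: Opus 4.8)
The plan is to reduce all three parts to the already-established linear Proposition~\ref{items} by arguing fiberwise, using the fact that a smooth subbundle has locally constant rank. First I would fix a point $p \in M$ and write $L_p \subset T_pM \oplus T_p^*M$ for the fiber, noting that $\dim T_pM = n$. The characteristic equations (\ref{characteristic smooth 1}) and (\ref{characteristic smooth 2}) are pointwise conditions: each holds for $L$ precisely when its linear counterpart holds for $L_p$ at every $p$. This observation is what lets the linear theory transfer.

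For part $(i)$ I would argue that, if $L$ is a forward or backward LD structure, then every fiber $L_p$ is a linear LD structure on $T_pM$, so Proposition~\ref{items}$(i)$ gives $\dim L_p = n$. Since $L$ is a subbundle its fiber dimension is constant and equals its rank, so $\mbox{rank}(L) = n$.

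For part $(ii)$ I would use the hypotheses $\mbox{rank}(L) = n$ and $\rho(L)^{\circ} \subset L \cap T^*M$ (as a bundle equation) to obtain, at each $p$, both $\dim L_p = n$ and the inclusion $\rho(L_p)^{\circ} \subset L_p \cap T_p^*M$. Proposition~\ref{items}$(ii)$ then upgrades this inclusion to equality fiber by fiber; concretely, this is the dimension count (\ref{dim}) performed in each fiber, since $\dim \rho(L_p)^{\circ} = n - \dim \rho(L_p) = \dim(L_p \cap T_p^*M)$. As the equality holds at every point, $L$ satisfies (\ref{characteristic smooth 1}) and is a forward LD structure. Part $(iii)$ would be identical after swapping $\rho$ with $\rho^*$, $TM$ with $T^*M$, and appealing to Proposition~\ref{items}$(iii)$.

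I expect the only real point of care to be the smoothness bookkeeping highlighted in the preceding Remark: the objects $\rho(L)^{\circ}$ and $L \cap T^*M$ need not be subbundles, so their fiber dimensions can jump and the pointwise identities do not automatically assemble into a clean global bundle statement. This is exactly why part $(ii)$ imposes the inclusion \emph{as a bundle equation} — it excludes the rank-jumping pathology and ensures that the fiberwise application of Proposition~\ref{items} yields a constant-rank equality valid on all of $M$. With constant rank in hand there is no further obstacle, since the argument is purely fiberwise and the linear case has already been settled.
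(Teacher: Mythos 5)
Your proof is correct and takes essentially the same approach as the paper: a fiberwise reduction to the linear Proposition~\ref{items} via the dimension count, with the constant-rank (bundle-equation) hypothesis handling the smooth bookkeeping in parts $(ii)$ and $(iii)$. The only cosmetic difference is in part $(i)$, where the paper first argues on the open dense set of regular points and then invokes constancy of the rank of $L$, whereas you apply the linear result directly at every fiber; both routes are valid.
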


\begin{proof}
 $(i)$ Since $\rho$ (resp. $\rho^*$) is a bundle map, there is an open dense
set on which $\rho(L)$ and hence $L\cap TM$ (resp. $\rho^*(L)$ and hence $L \cap T^*M$)
are bundles. 
Then the rank of $L$ on these points is $n$.
Since $L$ is a bundle one has that $\mbox{rank}(L(x))=n$ for all $x \in M$.  

$(ii)$ As $\rho(L)$ and $L\cap TM$
have constant rank by the hypothesis, we have the equation 
\begin{eqnarray}\label{dim smooth 1}
\mbox{dim}(L(x) \cap T_xM)+\mbox{dim}(\rho^*(L(x)))&=&
\mbox{dim}(L(x)) 
\end{eqnarray}
for all $x \in M$. Therefore $\rho(L(x))^{\circ} =  L(x) \cap T_x^*M $
for all $x \in M$, then one concludes that $\rho(L)^{\circ} =  L \cap T^*M $.

$(iii)$ As $\rho^*(L)$ and $L \cap T^*M$ have constant rank
by the hypothesis, we have the equation 
\begin{eqnarray}\label{dim smooth 2}
\mbox{dim}(L(x) \cap T_x^*M)+\mbox{dim}(\rho(L(x)))&=&
\mbox{dim}(L(x))
\end{eqnarray}
for all $x \in M$. Therefore the result follows.
\end{proof}

The set on which $\rho(L)$ and $L\cap TM$ (resp. $\rho^*(L)$ and $L \cap T^*M$)
are bundles is called the set of \emph{regular points} of $L$ \cite{Courant90}.

We proceed with the relation between
LD structures and Lagrangian subbundles of
$TM \oplus T^*M$.
The two bilinear pairings are defined by
\begin{eqnarray}
 \langle (v_1,\eta_1),(v_2,\eta_2) \rangle_{\mp}
=\frac{1}{2}\, \left(\langle \eta_1 | v_2 \rangle \mp \langle \eta_2 | v_1 \rangle \right)
\end{eqnarray}
for all $(v_1,\eta_1),(v_2,\eta_2)\in TM \oplus T^*M$. 

Proposition \ref{linear lagrangian} extends directly
to the following.

\begin{proposition}
 A subbundle $L \in TM \oplus T^*M$
satisfies both of the equations (\ref{characteristic smooth 1})
and (\ref{characteristic smooth 2})
if it is a Lagrangian subbundle with respect to
$\ll , \gg_+$ or $\ll , \gg _-$. 
\end{proposition}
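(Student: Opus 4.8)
The plan is to mimic exactly the proof of Proposition \ref{linear lagrangian}, but carried out fiberwise and then promoted to a statement about subbundles. The key observation is that the final proposition is the pointwise version of the linear result already established, so the geometric content is identical at each point $x \in M$; the only genuinely new issue is whether the pointwise equalities assemble into the bundle equations (\ref{characteristic smooth 1}) and (\ref{characteristic smooth 2}).

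First I would fix a point $x \in M$ and restrict attention to the fibers $T_xM$ and $T_x^*M$, so that $L(x) \subset T_xM \oplus T_x^*M$ is a linear subspace. Since by hypothesis $L$ is a Lagrangian subbundle with respect to $\ll\,,\,\gg_+$ or $\ll\,,\,\gg_-$, its fiber $L(x)$ is a Lagrangian subspace of $T_xM \oplus T_x^*M$ with respect to the corresponding linear pairing. Applying Proposition \ref{linear lagrangian} verbatim at the level of fibers, I obtain
\begin{eqnarray*}
L(x) \cap T_xM &\subset& \rho^*(L(x))^{\circ}, \\
L(x) \cap T_x^*M &\subset& \rho(L(x))^{\circ},
\end{eqnarray*}
for every $x \in M$.

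The second step is the dimension count. Since $L$ is a Lagrangian subbundle of the rank-$2n$ bundle $TM \oplus T^*M$ equipped with a pairing of split signature $(n,n)$, each fiber $L(x)$ has dimension $n$, exactly as in the linear argument. The same dimension count used in Proposition \ref{linear lagrangian} then upgrades the two inclusions above to equalities at each fiber, giving $\rho(L(x))^{\circ} = L(x) \cap T_x^*M$ and $\rho^*(L(x))^{\circ} = L(x) \cap T_xM$ for all $x \in M$. Pointwise, both characteristic equations hold.

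The main obstacle I anticipate is precisely the subtlety flagged in the remark preceding the proposition: the characteristic equations (\ref{characteristic smooth 1}) and (\ref{characteristic smooth 2}) are \emph{not} bundle equations in general, because $\rho(L)$ and $L \cap TM$ (resp. $\rho^*(L)$ and $L \cap T^*M$) need not have locally constant rank, and equality of fibers does not automatically mean equality of subbundles. However, the Lagrangian hypothesis forces $L$ to have constant rank $n$, and the pointwise equalities $\rho(L(x))^{\circ} = L(x) \cap T_x^*M$ hold at \emph{every} point, not merely on a regular open dense set; this already establishes the set-theoretic equality of the two characteristic objects fiber by fiber, which is exactly the meaning of the equations (\ref{characteristic smooth 1}) and (\ref{characteristic smooth 2}) as stated. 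Hence the conclusion follows, and I would not need to invoke constancy of rank of $\rho(L)$ or $L \cap TM$ separately, thereby sidestepping the bundle-versus-pointwise gap.
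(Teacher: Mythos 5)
Your proposal is correct and follows essentially the same route as the paper, which simply asserts that Proposition \ref{linear lagrangian} ``extends directly'' by applying the linear argument fiberwise; your isotropy inclusions and dimension count at each point $x$ are exactly that argument. Your additional care in noting that the resulting fiberwise equalities are precisely what equations (\ref{characteristic smooth 1}) and (\ref{characteristic smooth 2}) mean (since, per the paper's remark, they are not bundle equations in general) is a sound and welcome clarification, not a deviation.
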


\begin{definition}\label{definition sym skew smooth}
A subspace 
called a Dirac structure or a symmetric Dirac structure
if it is a Lagrangian subbundle with respect to
$\ll , \gg_+$ or $\ll , \gg _-$. 
\end{definition}

Now a locally defined representation of LD structures is given
as an extension of the linear case given in Theorem \ref{linear ab}. 

\begin{theorem}
Let $L$ be a LD structure on an $n$-dimensional manifold
$M$, then there exist two locally defined bundle maps $A:M \times \mathbb{R}^n \rightarrow TM$ 
and $B:M \times \mathbb{R}^n \rightarrow T^*M$ such that for all $m \in M$
\begin{eqnarray}
\mbox{ker} \, A_m \cap \mbox{ker} \, B_m &=&\{0 \},  \label{eqab smooth}  
\end{eqnarray}
and
\[
 \left(\mbox{Im} \, A_m\right)^\circ =
  B_m \left( \mbox{ker} \, A_m \right) \tag{\theequation a}  \label{eqab1 smooth} 
\]
if LD is a backward LD structure and
\[
 (\mbox{Im} \, B_m)^{\circ} =
  A_m \left( \mbox{ker} \, B_m \right) \tag{\theequation b}  \label{eqab2 smooth}
\]
otherwise. Here $A_m:\mathbb{R}^n \rightarrow T_mM$ and $B_m:\mathbb{R}^n \rightarrow T_m^*M$
are the linear maps defined at a fixed $m \in M$.

Conversely, any subbundle $L$ on $M$ given for all
$m \in M$ by
\begin{equation}
L(m)=\{ (A_m(y), B_m(y)); \, m \in M, \, y \in \mathbb{R}^n \} \label{eqab3 smooth}
\subset T_mM \oplus T_m^*M 
\end{equation}
is a LD structure.
\end{theorem}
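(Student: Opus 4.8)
The plan is to reduce everything to the linear representation Theorem \ref{linear ab} applied fibrewise, and then to upgrade to the smooth category by working with a local frame of the subbundle $L$. Since the statement only asserts local existence of $A$ and $B$, I would fix an arbitrary point $m_0 \in M$ and construct the bundle maps over a neighbourhood $U$ of $m_0$.

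First I would invoke Proposition \ref{items smooth}$(i)$ to record that $L$ has constant rank $n$. As a smooth rank-$n$ subbundle of $TM \oplus T^*M$, $L$ admits a smooth local frame $s_1,\dots,s_n$ defined on some neighbourhood $U$ of $m_0$. Splitting each section into its two components, $s_i(m)=(a_i(m),b_i(m))$ with $a_i$ a vector field and $b_i$ a one-form on $U$, I would define $A:U\times\mathbb{R}^n\to TM$ and $B:U\times\mathbb{R}^n\to T^*M$ by $A_m(e_i)=a_i(m)$ and $B_m(e_i)=b_i(m)$, extended linearly in the $\mathbb{R}^n$ factor. Smoothness of $A$ and $B$ is then immediate from smoothness of the frame.

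The verification of (\ref{eqab smooth}) and of (\ref{eqab1 smooth})/(\ref{eqab2 smooth}) is now purely pointwise. At each fixed $m\in U$ the vectors $s_i(m)$ form a basis of the linear LD structure $L(m)\subset T_mM\oplus T_m^*M$, so the pair $(A_m,B_m)$ is exactly the pair furnished by Theorem \ref{linear ab} applied to $L(m)$. Hence $\ker A_m \cap \ker B_m=\{0\}$ follows from linear independence of the frame, and the relevant characteristic equation at $m$ (which holds because $L$ is a forward, resp. backward, LD structure) yields the appropriate one of (\ref{eqab1 smooth}), (\ref{eqab2 smooth}) through the fibrewise identities $\rho(L(m))=\mathrm{Im}\,A_m$ and $L(m)\cap T_m^*M = B_m(\ker A_m)$ established in the proof of Theorem \ref{linear ab}. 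For the converse I would run the argument in reverse: given smooth $A,B$ satisfying (\ref{eqab smooth}), the map $(A_m,B_m)$ is injective at every $m$, so $L(m)=\{(A_m(y),B_m(y))\}$ has constant dimension $n$ and is a smooth subbundle because $A,B$ are smooth; Theorem \ref{linear ab} then shows each fibre satisfies the appropriate characteristic equation, whence $L$ is a smooth LD structure.

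The main obstacle, and the point I would emphasize, is reconciling the construction with the preceding Remark that the characteristic equations are not bundle equations: $\rho(L)$ and $L\cap T^*M$ need not be subbundles at non-regular points. The resolution is that I never frame $\rho(L)$ or $L\cap T^*M$; I frame only $L$ itself, which is a genuine rank-$n$ subbundle, and then read the characteristic conditions off fibre by fibre. Thus all smoothness is carried by $L$, while the characteristic equations are imposed pointwise exactly as in the linear theorem. A minor technical care is that the frame, and hence $A$ and $B$, exist only over $U$ and depend on the choice of frame; a different frame differs by a smooth $GL(n,\mathbb{R})$-valued map $g$, replacing $A,B$ by $A\circ g,\,B\circ g$, which leaves (\ref{eqab smooth}) and (\ref{eqab1 smooth})/(\ref{eqab2 smooth}) invariant.
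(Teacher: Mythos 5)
Your proposal is correct and follows essentially the same route as the paper: the paper's proof likewise takes a local basis of sections of $L$, splits it into the $TM$ and $T^*M$ components to obtain $A$ and $B$, and reduces all verifications pointwise to Theorem \ref{linear ab}. Your write-up simply supplies the fibrewise details and the remark about regular points that the paper leaves implicit.
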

\begin{proof}
Since $L$ is a subbundle, a choice of
a local basis of sections for $L$
gives two bundle maps $A:M \times \mathbb{R}^n \rightarrow TM$ 
and $B:M \times \mathbb{R}^n \rightarrow T^*M$.  
Then the remaining part of the proof is
obvious by the proof of Theorem \ref{linear ab}. 
\end{proof}

Another representation of LD structures is given as the following.

\begin{theorem}\label{smooth main}
{\bf($i$)} A forward LD structure $L$ on $M$ such that
$\rho(L)$ is a subbundle  
can be given by a pair $(E,\Omega)$, where $E \subset TM$ is a subbundle
and $\Omega: E \rightarrow E^*$ is a bundle map.

{\bf($ii$)} A backward LD structure $L$ on $M$ such that
$\rho^*(L)$ is a subbundle
can be given by a pair $(F,\Pi)$, where $F \subset T^*M$ is a subbundle
and $\Pi: F \rightarrow F^*$ is a bundle map.
\end{theorem}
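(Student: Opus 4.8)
The plan is to prove Theorem \ref{smooth main} by combining the pointwise correspondence of Theorem \ref{linear main} with a smoothness argument, the latter being the only genuinely new ingredient over the linear case. I would treat only part $(i)$, since $(ii)$ follows by the obvious dualization (interchanging the roles of $TM$ and $T^*M$, and of $\rho$ and $\rho^*$). The key preliminary observation is that the hypothesis that $E := \rho(L)$ is a subbundle gives $E$ constant rank, say $k$; combined with $\mathrm{rank}(L) = n$ from Proposition \ref{items smooth}$(i)$ and the fibrewise dimension identity \eqref{dim smooth 2}, this forces $L \cap T^*M = \rho(L)^{\circ}$ to have constant rank $n-k$, so it too is a smooth subbundle. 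This constancy of all the relevant ranks is what makes the local frame constructions below possible.

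For the forward construction $L \mapsto (E,\Omega)$, Theorem \ref{linear main}$(i)$ already supplies, at each $m$, a well-defined linear map $\Omega_m : E_m \to E_m^*$ given by $\Omega_m(\rho(x)) := \rho^*(x)|_{E_m}$ for $x \in L(m)$; well-definedness is inherited fibrewise and needs no re-proof. The work is to show $m \mapsto \Omega_m$ is a smooth section of $\mathrm{Hom}(E, E^*)$. I would argue locally: since $E$ and $L \cap T^*M$ are subbundles and $\rho$ descends to an isomorphism $L/(L\cap T^*M) \cong E$, one can choose on a neighborhood of any point a smooth frame $(v_1,\eta_1), \dots, (v_n,\eta_n)$ of $L$ for which $v_1, \dots, v_k$ form a frame of $E$ while $(0,\eta_{k+1}), \dots, (0,\eta_n)$ span the subbundle $L \cap T^*M$. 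Then $\Omega$ is pinned down by $\langle \Omega(v_i) \,|\, v_j \rangle = \langle \eta_i \,|\, v_j \rangle$ for $1 \le i,j \le k$, whose right-hand sides are smooth functions of $m$; hence $\Omega$ is a smooth bundle map.

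For the converse $(E,\Omega) \mapsto L$, given a subbundle $E \subset TM$ and a smooth bundle map $\Omega : E \to E^*$, I would define fibrewise, exactly as in Theorem \ref{linear main}, the set $L(m) = \{(v,\eta) : v \in E_m, \ \eta - \Omega_m(v) \in E_m^{\circ}\}$. Pointwise this is a forward LD structure, and its rank is constantly $\dim E_m + \dim E_m^{\circ} = n$. For smoothness I would exhibit a local frame: a smooth frame $f_1, \dots, f_k$ of $E$, a smooth extension of each $\Omega(f_i)$ to a covector $\widetilde{\eta}_i \in T^*M$ with $\widetilde{\eta}_i|_E = \Omega(f_i)$ (possible via a smooth splitting of the surjective restriction map $T^*M \to E^*$), and a smooth frame $g_1, \dots, g_{n-k}$ of the annihilator subbundle $E^{\circ}$. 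Then $(f_1,\widetilde{\eta}_1), \dots, (f_k,\widetilde{\eta}_k), (0,g_1), \dots, (0,g_{n-k})$ are smooth sections spanning $L$ fibrewise, so $L$ is a smooth subbundle satisfying \eqref{characteristic smooth 1} by the fibrewise computation.

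The main obstacle I anticipate is precisely the adaptation of local frames in the forward direction: one must split a smooth frame of $L$ into a part projecting to a frame of $E$ and a part lying in $L \cap T^*M$, and this is exactly where the constant-rank (subbundle) hypotheses are indispensable, since a dimension jump in $\rho(L)$ would break both the frame adaptation and the smoothness of $\Omega$. Once constancy of all the relevant ranks is secured, smoothness of $\Omega$ reduces to the smoothness of the pairings $\langle \eta_i \,|\, v_j \rangle$, and the remaining algebraic identities are the routine fibrewise facts already established in Theorem \ref{linear main}.
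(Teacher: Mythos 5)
Your proposal is correct and follows essentially the same route as the paper: define the correspondence fibrewise exactly as in Theorem \ref{linear main} and then verify smoothness. The paper simply asserts the smoothness (``$L$ is a subbundle since $E$ is a subbundle and $\Omega$ is a bundle map'', and ``$\Omega$ is a bundle map since $\rho^*$ is a bundle map''), whereas you supply the local-frame details that justify those assertions; this is a welcome elaboration, not a different argument.
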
 

\begin{proof}
Only the proof of $(i)$ is given, a similar
reasoning holds for the case $(ii)$.

$(i)$ For a given pair $(E,\Omega)$ consider $L\subset TM \oplus T^*M$ defined by
\begin{equation}
L = \{ (v,\eta) ; \, v\in E, \, \eta - \Omega(v) \in E^{\circ} \}.
\end{equation}
Then $L$ is a subbundle since $E$ is a subbundle and
$\Omega$ is a bundle map.
It is also easy to see that 
$\rho(L)=E$ and 
\begin{equation}
L \cap T^*M = \{ \eta ; \, (0,\eta) \in L \}= \{ \eta ; \,  \eta - \Omega(0)=\eta \in E^{\circ}  \}=E^{\circ}.
\end{equation}
Thus Equation \ref{characteristic smooth 1} is
obtained.

Conversely, for a given forward LD structure $L$ set $\rho(L)=E$.
Then the map $\Omega:E \rightarrow E^*$
defined for all $x \in L$ by $\Omega(\rho(x)):=\rho^*(x)|_E$
is well-defined by the condition (\ref{characteristic smooth 1}).
Then it is a bundle map, since
$\rho^*$ is a bundle map.
\end{proof}

Having an equivalent picture of 
LD structures in terms of both 
subbundles of $TM \oplus T^*M$,
and pairs $(E, \Omega)$ and $(F,\Pi)$
is very useful as seen in the preceding section. 
To make use of this equivalent picture we make the following assumption.

\begin{assumption}\label{assumption}
 In the sequel, any the forward (resp. backward) LD structure $L$ will
be assumed to be given by a pair
$(E, \Omega)$ (resp. $(F,\Pi)$) in such a way that
the characteristic distribution $\rho(L)=E$ (resp. co-distribution $\rho^*(L)=F$) has constant rank.
\end{assumption}

For a LD structure, as in the linear case, one has the unique decomposition
\begin{equation}
 \Omega = \Omega ^+ +\Omega^-
\end{equation}
where $\Omega ^+$ is symmetric and $\Omega^-$ is
skew-symmetric.
Similarly one can define the unique decomposition
of $\Pi$ into symmetric and skew-symmetric parts:
\begin{equation}\label{splitting pi smooth}
 \Pi = \Pi ^+ +\Pi^-.
\end{equation}

\begin{proposition}\label{prop_inner smooth}
$(i)$ A forward LD structure $L$ is locally
maximally isotropic with respect to some
inner product $\ll,\gg$ of split sign and of the form 
\begin{equation}\label{bilinear smooth 1}
\ll  (v_1,\eta_1), (v_2,\eta_2)  \gg \, =  \langle  \eta_1 |  v_2 \rangle +
\langle  \eta_2 |  v_1  \rangle 
 - 2\, \Psi(v_1,v_2) , 
\end{equation}
for all $(v_1,\eta_1),(v_2,\eta_2) \in TM \oplus T^*M$,
where $\Psi$ is a symmetric covariant tensor field on $M$.

$(ii)$ A backward LD structure $L$ is
maximally isotropic with respect to some
inner product $\ll,\gg$ of split sign and of the form
\begin{equation}\label{bilinear smooth 2}
\ll  (v_1,\eta_1), (v_2,\eta_2)  \gg \, =  \langle  \eta_1 |  v_2 \rangle +
\langle  \eta_2 |  v_1  \rangle 
 - 2\, \Phi(\eta_1,\eta_2) , 
\end{equation}
for all $(v_1,\eta_1),(v_2,\eta_2) \in TM \oplus T^*M$,
where $\Phi$ is a symmetric contravariant tensor field on $M$.
\end{proposition}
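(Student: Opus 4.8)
The plan is to lift the linear statement of Proposition~\ref{prop_inner} to the bundle setting fiberwise, the only genuinely new content being the smoothness of the objects involved; accordingly I treat only part $(i)$, since $(ii)$ is entirely analogous upon replacing $TM$, $E$, $\Omega$, $\Psi$ by $T^*M$, $F$, $\Pi$, $\Phi$.

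First I would invoke Theorem~\ref{smooth main}$(i)$ together with Assumption~\ref{assumption} to represent $L$ by a pair $(E,\Omega)$, where $E=\rho(L)\subset TM$ is a subbundle of constant rank and $\Omega:E\rightarrow E^*$ is a bundle map. Because $E$ has constant rank, it admits locally a smooth complementary subbundle $E'$ with $TM=E\oplus E'$ over a neighborhood; extending $\Omega$ by zero along $E'$ yields a smooth bundle map $TM\rightarrow T^*M$, still written $\Omega$, which restricts to the original one on $E$. Using this extension I would set
\begin{equation*}
\Psi(v_1,v_2):=\frac{1}{2}\left( \langle \Omega(v_1) | v_2 \rangle + \langle \Omega(v_2) | v_1 \rangle \right),
\end{equation*}
a symmetric covariant $2$-tensor field on the neighborhood whose smoothness is immediate from $\Omega$ being a bundle map. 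As in the linear case the extension of $\Omega$ is not canonical, but the values of $\Psi$ that enter the pairing restricted to $L$ are unaffected by this choice.

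It then remains to verify the two claims fiberwise. At each $m\in M$ the fiber $L(m)\subset T_mM\oplus T_m^*M$ is precisely the linear forward LD structure determined by $(E_m,\Omega_m)$, so the linear Proposition~\ref{prop_inner}$(i)$ applies verbatim: $L(m)$ is isotropic for the pairing (\ref{bilinear smooth 1}), and in the basis $(0,\beta_i),(\alpha_i,\Omega(\alpha_i))$ built there this pairing is represented by the block matrix with the identity $I_n$ on the anti-diagonal and zeros on the diagonal, whence its signature is $(n,n)$. The pairing $\ll,\gg$ is smooth because $\Psi$ is, and since isotropy and the split-sign condition are pointwise properties they now hold throughout the neighborhood, which is the asserted local statement.

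The only real obstacle is the smooth local extension of $\Omega$ from $E$ to all of $TM$; the remainder is either a direct transcription of the proof of Proposition~\ref{prop_inner} in each fiber or the observation that the bilinear form is smooth because it is assembled from the bundle map $\Omega$. It is exactly the constant-rank hypothesis of Assumption~\ref{assumption} that supplies the smooth complement $E'$, and hence the smooth extension and the smooth tensor field $\Psi$, so that the fiberwise signature computation globalizes to a genuine split-signature pairing over the neighborhood.
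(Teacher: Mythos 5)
Your proposal is correct and follows essentially the same route as the paper: represent $L$ by $(E,\Omega)$ via Theorem~\ref{smooth main} and Assumption~\ref{assumption}, extend $\Omega$ locally to a bundle map on $TM$ to define the symmetric tensor $\Psi$, and then reduce to the linear Proposition~\ref{prop_inner} applied fiberwise. The only difference is that you spell out the mechanism of the local extension (a smooth complement $E'$ supplied by the constant-rank hypothesis, with $\Omega$ extended by zero along it), a detail the paper leaves implicit.
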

\begin{proof}
The point here is that one can extend
$\Omega$ (resp. $\Pi$) to
$TM$ (resp. $T^*M$) locally
to define $\Psi$ (resp. $\Phi$).
The remainder of the proof is a straightforward extension of
Proposition \ref{prop_inner smooth} when
considered pointwise.     
\end{proof}

Dirac structures form a particular
subclass of LD structures, which include
symplectic, Poisson and foliation geometries.
Some other examples of LD structures
are discussed below. 

\begin{example}\label{ex grad}

Let $(M,g)$ be a pseudo-Riemannian manifold. 
The musical isomorphism $g^\sharp:T^*M \rightarrow TM$ of the
pseudo-Riemannian metric $g$
is a bundle map.
Then the graph of $g^\sharp$ given by
\begin{equation}
 L=\{(X,\eta); \, X=-g^\sharp(\eta) \} \in TM \oplus T^*M
\end{equation}
defines a LD structure on $M$ which is symmetric.
It will be explained in the next section that
this setting allows one to
study gradient control systems with constraints \cite{Blankenstein03}.
\end{example}

\begin{example}
A bundle map $\Pi : T^*M \rightarrow TM$
is called a \emph{Leibniz structure} \cite{OrtegaPlanas-Bielsa04}.
Then the graph of $\Pi$ is a LD structure
on $M$. These structures are shown to
model a very large family of physical
systems \cite{Morrison86,OrtegaPlanas-Bielsa04}. However,
LD structures also
allow to add some constraints when modeling
physical systems (cf. Section \ref{Dynamics}).
\end{example}

\section{Dynamics on LD manifolds}
\label{Dynamics}
Dynamic properties of LD structures are
given in this section. We first give the notion of
admissible functions. The main ingredient of 
this section is
a formulation of dissipative
Hamiltonian systems with constraints.

\subsection{Admissible functions}
\label{Admissible}
Admissible functions on LD manifolds are
defined as in the Dirac case \cite{Courant90}.
This definition makes sense for only 
backward LD structures
as being a generalization of the Poisson bracket. 

\begin{definition}
 Let $L$ be a backward LD structure
on a manifold $M$. A function $f$ on $M$ is called
an \emph{admissible function} if $df \in \rho^*(L)$.
\end{definition}
If $f$ is an admissible function, then 
$(X_f,df)\in L$ for some vector field
$X_f$ on $M$.

\begin{lemma}
 Let $L$ be a backward LD structure
on a manifold $M$. If $f$ and $g$ are
admissible functions then $fg$ is
also an admissible function.
\end{lemma}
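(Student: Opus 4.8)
The plan is to reduce the claim entirely to the Leibniz rule for the exterior derivative, together with the observation that admissibility is a \emph{pointwise linear} condition. By Assumption \ref{assumption}, the co-distribution $\rho^*(L)$ has constant rank and is therefore a genuine subbundle of $T^*M$, so its fibre $\rho^*(L)(m)$ over each point $m$ is a linear subspace of $T_m^*M$. Since $f$ and $g$ are admissible, the hypothesis $df \in \rho^*(L)$ and $dg \in \rho^*(L)$ means precisely that $df_m, dg_m \in \rho^*(L)(m)$ for every $m \in M$.

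First I would write out the product rule for the differential of the pointwise product,
\begin{equation*}
d(fg) = f\, dg + g\, df ,
\end{equation*}
and then evaluate at an arbitrary point $m \in M$. Because $f(m)$ and $g(m)$ are real scalars, the value $d(fg)_m = f(m)\, dg_m + g(m)\, df_m$ is just a scalar linear combination of the two covectors $df_m$ and $dg_m$. Next, since $\rho^*(L)(m)$ is a linear subspace containing both $df_m$ and $dg_m$, it contains every such combination; hence $d(fg)_m \in \rho^*(L)(m)$. As $m$ was arbitrary, this gives $d(fg) \in \rho^*(L)$, which is exactly the assertion that $fg$ is admissible, and so there is a vector field $X_{fg}$ with $(X_{fg}, d(fg)) \in L$.

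I do not expect any genuine obstacle here; the single point deserving care is the justification that membership in $\rho^*(L)$ is preserved under multiplication by smooth functions and under addition, which is guaranteed by $\rho^*(L)$ being a vector subbundle (the role of Assumption \ref{assumption}). It is worth emphasizing in the write-up that the backward hypothesis enters only through the definition of admissibility in terms of $\rho^*(L)$, and that the symmetric/skew-symmetric splitting $\Pi = \Pi^+ + \Pi^-$ plays no part in this computation, since the argument uses nothing about $L$ beyond the linearity of its image under $\rho^*$.
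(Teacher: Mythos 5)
Your argument is correct and is essentially the paper's own proof: both rest on the Leibniz rule $d(fg)=f\,dg+g\,df$ together with the fibrewise linearity of the structure (and note that $\rho^*(L)(m)=\rho^*(L(m))$ is a linear subspace at each point even without Assumption~\ref{assumption}, so that hypothesis is not really needed here). The only difference is that the paper works upstairs in $L$, forming the combination $g(X_f,df)+f(X_g,dg)\in L$ of sections, which has the small advantage of exhibiting the explicit vector field $X_{fg}=gX_f+fX_g$ invoked in the remark immediately following the lemma.
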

\begin{proof}
By the hypothesis $(X_f,df),(X_g,dg) \in L$
for some vector fields $X_f$ and $X_g$ on $M$.
Then one computes
 \begin{eqnarray*}
g(X_f,df)+f(X_g,dg)
&=&(gX_f+fX_g,gdf+fdg) \\
&=&(gX_f+fX_g,d(fg)) \in L.
\end{eqnarray*}
So, $fg$ is an admissible function.
\end{proof}

Note that if $f$ and $g$ admissible functions,
then $fg$ is an admissible such that
$(X_{fg},d(fg) \in L$, where
$X_{fg}:=gX_f+fX_g$. 

In accordance with the Dirac case, a bracket $\{\{, \}\}$ on
admissible functions on $M$ can be defined by
\begin{equation}\label{defintion bracket}
\{\{f,g \}\}=X_f (g)=\langle dg | X_f \rangle
\end{equation}
for some $(X_f,df),(X_g,dg)\in L$.
If $(F,\Pi)$ is the corresponding backward LD pair
to $L$, then
\begin{equation}\label{bracked back}
\{\{f,g \}\}=\langle dg | \Pi(df) \rangle.
\end{equation}
Note that the bracket $\{\{, \}\}$
is well-defined as $\{\{f,g \}\}$ does not dependent on $X_f$ and $X_g$.

The following result is an extension of the Dirac case \cite{Courant90}. 

\begin{proposition}
With the notation above, the bracket $\{\{, \}\}$ on
admissible
functions satisfy the Leibniz identities:
\begin{eqnarray}
\{\{fg,h \}\}=f\{\{g,h \}\}+g\{\{f,h \}\}\\
\{\{h,fg \}\}=f\{\{h,g \}\}+g\{\{h,f \}\}
\end{eqnarray}
for all admissible functions $f,g,h$ on $M$.
\end{proposition}
\begin{proof}
Let $(X_f,df),(X_g,dg),(X_h,dh) \in L$. Then
\begin{eqnarray*}
\{\{fg,h \}\}&=&\langle dh | \Pi(d(fg)) \rangle \\
&=&\langle dh | \Pi(fdg+gdf) \rangle \\
&=&\langle dh | f\Pi(dg)+g \Pi (df) \rangle \\
&=&f \langle dh | \Pi(dg) \rangle +g \langle dh | \Pi (df) \rangle \\
&=&f\{\{g,h \}\}+g\{\{f,h \}\},
\end{eqnarray*}
since $\Pi$ is a bundle map,
and 
\begin{eqnarray*}
\{\{h,fg \}\}&=&\langle dh | \Pi(d(fg)) \rangle \\
&=&\langle d(fg) | \Pi(dh) \rangle \\
&=&\langle fdg+gdf | \Pi(dh) \rangle \\
&=&f \langle dg | \Pi(dh) \rangle +g \langle df | \Pi (dh) \rangle \\
&=&f\{\{h,g \}\}+g\{\{h,f \}\},
\end{eqnarray*}
$fg$ is an admissible function.
\end{proof}

Observe that the bracket $\{\{, \}\}$ splits into a skew-symmetric
bracket $\{,\}$ and a symmetric bracket $[,]$. In fact using the splitting 
(\ref{splitting pi smooth}) one has
\begin{eqnarray*}
\{\{f,g \}\}&=&\langle df | \Pi(dg) \rangle \\
&=&\langle df | \Pi^-(dg) \rangle + \langle df | \Pi^+(dg) \rangle \\
&=& \{f,g \}+[f,g],
\end{eqnarray*}
where
\begin{eqnarray}
\{f,g \}:=\langle df | \Pi^-(dg) \rangle
\end{eqnarray}
and
\begin{eqnarray}
 [f,g]:=\langle df | \Pi^+(dg) \rangle.
\end{eqnarray}

\begin{remark}
Note that the bracket of two admissible functions
is not again an admissible function, in general.
This is so even in the Dirac case,
however an integrability 
condition ensures the closedness of
the bracket \cite{Courant90}. 
\end{remark}

A (weak) integrability of LD structures on manifolds is
defined in accordance with the one on Dirac structures \cite{Courant90,DalsmovanderSchaft99}.

\begin{definition}
Let $L$ be a backward LD structure on a 
manifold $M$. If $\rho^*(L)^{\circ}=L \cap TM$
is involutive and the bracket $\{\{, \}\}$ 
is closed on admissible functions,
then $L$ is called a weakly integrable backward LD structure.
\end{definition}

Note that in the case of Dirac structures the integrability
is equivalent to the above conditions and additionally the Jacobi identity on
admissible functions \cite{DalsmovanderSchaft99}. (We already
assume that $\rho^*(L)$ has constant rank by Assumption \ref{assumption}.)

The following shows that the weak integrability
definition makes sense. 

\begin{proposition}\label{induced}
Let $L$ be a weakly integrable backward LD structure on a 
manifold $M$. If the foliation of $L \cap TM$ is denoted by
$\Phi$ and $M/\Phi$ is a manifold,  then $M/\Phi$ inherits a Leibniz structure.
\end{proposition}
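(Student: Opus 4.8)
The plan is to identify the smooth functions on the quotient $M/\Phi$ with the admissible functions on $M$, and then to transport the bracket $\{\{,\}\}$ down to $M/\Phi$, where the Leibniz identities already established will force it to arise from a bundle map $T^*(M/\Phi)\to T(M/\Phi)$, i.e. a Leibniz structure in the sense of the examples. Writing $\pi:M\to M/\Phi$ for the projection, the whole argument rests on the observation that admissibility is the same as being constant along the leaves of $\Phi$.

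First I would pin down which functions descend. A function $h$ on $M$ is of the form $\pi^*\bar h$ for some $\bar h$ on $M/\Phi$ precisely when it is constant along the leaves of $\Phi$, that is, when $dh$ annihilates $L\cap TM$. Since $L$ is a backward LD structure, the characteristic equation (\ref{characteristic smooth 2}) gives $L\cap TM=\rho^*(L)^{\circ}$, and because $\rho^*(L)$ has constant rank by Assumption \ref{assumption}, the double-annihilator identity yields $(L\cap TM)^{\circ}=\rho^*(L)$. Hence $h$ is leaf-wise constant iff $dh\in\rho^*(L)$, which is exactly the condition that $h$ be admissible. Thus the admissible functions on $M$ are precisely the pull-backs of functions on $M/\Phi$. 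This identification is the backbone of the proof and uses both the involutivity of $L\cap TM$ (which, via Frobenius, produces the foliation $\Phi$) and the characteristic equation.

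Next I would push the bracket down. Given $\bar f,\bar g\in C^\infty(M/\Phi)$, their pull-backs $\pi^*\bar f,\pi^*\bar g$ are admissible, so $\{\{\pi^*\bar f,\pi^*\bar g\}\}$ is defined, and it is independent of the choice of representative vector fields as noted after (\ref{defintion bracket}). Weak integrability asserts that $\{\{,\}\}$ is closed on admissible functions, so $\{\{\pi^*\bar f,\pi^*\bar g\}\}$ is again admissible, hence again leaf-wise constant, and therefore descends to a unique function on $M/\Phi$, which I take as the definition of $\{\{\bar f,\bar g\}\}$. Well-definedness is automatic, since a leaf-wise constant function determines a unique function on the quotient. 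The Leibniz identities proved earlier are pointwise algebraic identities in $f,g,h$, so they pass verbatim to the descended bracket.

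Finally, a bracket on $C^\infty(M/\Phi)$ that is a derivation in each argument is automatically given by a $(2,0)$-tensor field, equivalently a bundle map $\Pi_{M/\Phi}:T^*(M/\Phi)\to T(M/\Phi)$; by the definition recalled in Example 2 this is exactly a Leibniz structure, and the proof is complete. I expect the only genuine obstacle to lie in the first step: checking cleanly that admissibility coincides with being leaf-wise constant and that there are enough such functions, so that the descended bracket is defined on all of $C^\infty(M/\Phi)$ rather than merely on a subalgebra. The constant-rank hypothesis and the involutivity of $L\cap TM$ are precisely what make the double-annihilator identity and the Frobenius foliation available; once these are secured, the descent and the recognition of a Leibniz structure are formal consequences of the bi-derivation property.
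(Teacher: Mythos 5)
Your proof is correct and follows essentially the same route as the paper's: identify functions on $M/\Phi$ with leaf-wise constant functions on $M$, show via the characteristic equation and the constant-rank assumption that these are exactly the admissible functions, and then use weak integrability to descend the bracket, whose Leibniz identities make it a Leibniz structure downstairs. You simply make explicit two steps the paper leaves implicit (the double-annihilator argument identifying admissibility with leaf-wise constancy, and the passage from a bi-derivation bracket to a bundle map $T^*(M/\Phi)\to T(M/\Phi)$), which is a welcome amplification rather than a different approach.
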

\begin{proof}
 Functions on $M/\Phi$ can be
considered as $\Phi$-invariant functions on $M$. These functions
are the ones $f \in C^\infty (M)$ with 
$df(T\Phi)=0$. By the definition
these functions correspond to the admissible functions.
Therefore by the weak integrability assumption they are closed under
the bracket and give rise to an induced bracket on  $M/\Phi$,
which satisfies the Leibniz identities. 
\end{proof}

\subsection{Nonconservative systems with constraints}
\label{GIHS}

In this subsection we are concerned
with backward LD structures. This is because most of the
physical examples we study fall into that category.

Let $L$ be a LD structure on
a  manifold $M$, then one can extend the notion
of implicit Hamiltonian systems \cite{DalsmovanderSchaft99} to LD structures
as follows.  
\begin{definition}
Let $H:M \rightarrow \mathbb{R}$ be a Hamiltonian. The dissipative implicit
Hamiltonian system (DIHS) corresponding to
$(M,L,H)$ is given by 
\begin{equation}\label{ham}
(\dot{x},dH(x))\in L(x), \, \\\ x \in M.
\end{equation}
\end{definition}
In this setting, $\rho(L)$ describes the set of
admissible flows and $\rho^*(L)$ describes the set of algebraic constraints.
Assume that $L$ is represented by the pair $(F=\rho^*(L),\Pi)$, then
the DIHS corresponding to $(M,L,H)$
has a local representation
\begin{equation}\label{eqmot}
  \begin{split} 
  \dot{x}&=\Pi(x) \, \frac{\partial H}{\partial x}(x)+G(x)\, \lambda, \\  
 0&=G^{T}(x)\, \frac{\partial H}{\partial x}(x), 
  \end{split}
\end{equation}
where $\frac{\partial H}{\partial x}(x)$ stands for the column vector of
partial derivatives of $H$, and $G(x)$ is a full rank matrix
with $\mbox{Im}\,  G(x)=L(x) \cap T_xM$, and $\lambda$ are Lagrange multipliers
corresponding to the algebraic constraints
$0=G^{T}(x)\, \frac{\partial H}{\partial x}(x)$ \cite{DalsmovanderSchaft99}.

In terms of brackets one can obtain the equations of
motion as
\begin{equation}\label{bracket eq mot}
\frac{df}{dt}= \dot{x}(f)=\langle df | \dot{x} \rangle
= \langle df | \Pi(dH) \rangle=\{\{f,H \}\}
\end{equation}
for all admissible functions $f \in C^\infty(M)$.
Therefore, if the splitting (\ref{splitting pi smooth}) is considered, by (\ref{bracket eq mot})
one obtains
\begin{equation}\label{energy}
\frac{dH}{dt}=\langle \, dH  \, | \, \Pi^+(dH) \, \rangle=[H,H]. 
\end{equation}

If $L$ is a Dirac structure
then Equation \ref{energy} is nothing but
the conservation of energy.  
For nonconservative systems 
Equation \ref{energy}
has several meanings which will be cleared below.

\begin{example}[Gradient systems with constraints]
Let $(M,g)$ be a pseudo-Riemannian manifold and let
$F \subset T^*M$ be a subbundle. Consider the LD structure
\begin{equation}
 L=\{(X,\eta); \, X + g^\sharp(\eta) \in F^{\circ} \} \subset TM \oplus T^*M.
\end{equation}
Let $S: M \rightarrow \mathbb{R}$ be an \emph{entropy function} \cite{Morrison86}. 
Then the gradient system with constrained corresponding to
$(M,L,S)$ is defined by
\begin{equation}\label{grad def}
(\dot{x},dS(x))\in L(x), \, \\\ x \in M.
\end{equation}
Or, it can be represented by
\begin{equation}\label{eqmot grad}
  \begin{split} 
  \dot{x}&=-g^\sharp(x) \, \frac{\partial S}{\partial x}(x)+G(x)\, \lambda, \\  
 0&=G^{T}(x)\, \frac{\partial S}{\partial x}(x), 
  \end{split}
\end{equation}
where $G(x)$ is a full rank matrix
with $\mbox{Im}\, G(x)=L(x) \cap T_xM$ \cite{Blankenstein03}.

Then the equations of
motions in brackets read 
\begin{equation}\label{bracket eq mot grad}
\frac{df}{dt}=  -\langle df | g^\sharp(dS) \rangle=[f,S ]
\end{equation}
for all admissible functions $f \in C^\infty(M)$.
Recall here that the bracket $[,]$ is called the
\emph{Beltrami bracket} \cite{Crouch80}.
Eventually one obtains the equation
\begin{equation}\label{energy grad}
\frac{dS}{dt}=-\langle \, dS  \, | \, g^\sharp(dS) \, \rangle=[S,S] \le 0
\end{equation}
which is called the \emph{entropy equation} \cite{Blankenstein03}.

One of the examples of gradient systems with constraints
is RCL circuits with excess elements. This topic was
studied in \cite{Blankenstein05} by using LD
structures, but we believe that it is more
convenient to do a study particularly by
symmetric Dirac structures. 
\end{example}

\begin{example}[Metriplectic systems with constraints]
Let $M$ be a manifold and $F \in T^*M$ be a subbundle.
Let $P:T^*M \rightarrow TM$ be a Poisson structure and
$g$ be (possibly degenerate) Riemannian metric. Set a
Leibniz tensor by 
\begin{equation}
 \Pi=P-g^\sharp
\end{equation}
then the LD structure given by
\begin{equation}
 L=\{(X,\eta); \, X + \Pi(\eta) \in F^{\circ} \} \subset TM \oplus T^*M.
\end{equation}
is called a \emph{metriplectic structure} \cite{Morrison86}.
If $H:M \rightarrow \mathbb{R}$ is a smooth function,
then the system given by
\begin{equation}\label{ham met}
(\dot{x},dH(x))\in L(x), \, \\\ x \in M.
\end{equation}
is called a \emph{metriplectic system with constraints}.
The corresponding equations of
motions take the form 
\begin{equation}\label{bracket eq mot metrip}
\frac{df}{dt}=\langle df | P(dH) \rangle  -\langle df | g^\sharp(dH) \rangle=\{f,H\}+[f,H ]
\end{equation}
for all admissible functions $f \in C^\infty(M)$.
Then one obtains the equation
\begin{equation}\label{energy metrip}
\frac{dH}{dt}=-\langle \, dH  \, | \, g^\sharp(dH) \, \rangle=[H,H] \le 0
\end{equation}
which describes the dissipation of energy \cite{Blochetal96,NguyenTurski01}.
\end{example}
 
Now we discuss how to determine the $\lambda$ in Equation \ref{eqmot}, see
\cite{DalsmovanderSchaft99} for details.  
Assume that the $n \times k$ matrix $G(x)$ has rank $k \le n$. Then there
exists an $(n-k) \times n$ matrix $K(x)$ such that $K(x)G(x)=0.$
Therefore multiplying by $K(x)$ puts Equation \ref{eqmot} to the form
\begin{equation}\label{eqmot new}
  \begin{split} 
 \left[ \begin{array}{ccc}
K(x) \\
0 \end{array} \right]
  \dot{x}&= \left[ \begin{array}{ccc}
K(x)\Pi(x) \\
G(x)^T \end{array} \right]   \, \frac{\partial H}{\partial x}(x).
  \end{split}
\end{equation}  

\begin{example}[Mechanical systems with damping]
Let $Q$ be a manifold (configuration
space) and let $q=(q_1...,q_n)$ be a local coordinate system on $Q$.
Consider a Hamiltonian $H(q,p)$ on $M=T^*Q$ where
$(q,p)$ is the natural coordinate system on $T^*Q$. 
A \emph{mechanical system with damping} \cite{vdSchaft00} subject to 
$k$ independent kinematic constraints
\begin{equation}\label{constraints A}
 {A}^T(q) \, \dot{q}=0,
\end{equation}
can be defined by the representation
\begin{eqnarray*}
  \left[ \begin{array}{ccc}
\dot{q} \\
\dot{p} \end{array} \right] 
 &=&   \left[ \begin{array}{ccc}
O & I\\
-{I} & {R}(q) \end{array} \right]
  \left[ \begin{array}{ccc}
\frac{\partial H}{\partial q}(q,p) \\
\frac{\partial H}{\partial p}(q,p) \end{array} \right] 
+  \left[ \begin{array}{ccc}
{O} \\
{A}(q) \end{array} \right]\, \lambda, \\  
 O&=&   \left[ \begin{array}{ccc}
{O} & {A}^T(q) \end{array} \right] 
  \left[ \begin{array}{ccc}
\frac{\partial H}{\partial q}(q,p) \\
\frac{\partial H}{\partial p}(q,p) \end{array} \right],
 \end{eqnarray*}
where ${R}(q)$ is a semidefinite matrix.
Here the constraint forces $A(q)\lambda$ with
$\lambda \in \mathbb{R}^k$ are uniquely determined by the
requirement that the constraints (\ref{constraints A})
have to be satisfied for all time.  
Since $\mbox{rank}(A(q))=k$, one can find an
$(n-k)\times n$ matrix $K(q)$ of constant rank $n-k$
such that $K(q)A(q)=0.$ Then the above system
assumes the form
\begin{eqnarray*}
\left[ \begin{array}{ccc}
I & O\\
O & {K}(q) \\
O & O
 \end{array} \right]  \left[ \begin{array}{ccc}
\dot{q} \\
\dot{p} \end{array} \right] 
 &=&   \left[ \begin{array}{ccc}
O & I\\
-{K}(q) & K(q){R}(q) \\
O & A^T(q)
 \end{array} \right]
  \left[ \begin{array}{ccc}
\frac{\partial H}{\partial q}(q,p) \\
\frac{\partial H}{\partial p}(q,p) \end{array} \right] .
 \end{eqnarray*}  

In terms of LD structures one can define
the system in question as follows.  As 
the matrix $A(q)$ has rank $k$, its columns span
a co-distribution, say $G_0$, of constant rank on $Q$.
Set $G:=\pi^*(G_0)$ with
$\pi: T^*Q \rightarrow Q$ the natural projection. 
Let $B:T^*(T^*Q)\rightarrow T(T^*Q)$
be the canonical Poisson structure on $T^*Q$, which 
has the matrix form 
\begin{eqnarray*}
 \left[ \begin{array}{ccc}
O & I\\
-{I} & O \end{array} \right]
 \end{eqnarray*}
in the natural coordinates $(q,p)$.
Let $\tilde{R}:T^*(T^*Q)\rightarrow T(T^*Q)$ be the bundle map with
the matrix
\begin{eqnarray*}
 \left[ \begin{array}{cc}
O & O\\
O & -{R}(q) \end{array} \right]
 \end{eqnarray*}
and set the Leibniz structure
given by 
$\Pi:=B-\tilde{R}$.
Consider the LD structure
\begin{equation}
 L=\{(X,\eta); \, X-\Pi(\eta) \in G\} \subset TM \oplus T^*M,
\end{equation}
then the mechanical system with damping can be
defined by 
\begin{equation}
 (\dot{x},dH(x)),\, x=(q,p)\in M.
\end{equation}
The next example will illustrate
classical mechanical systems with damping more concretely.
\end{example}

\begin{example}(\cite{vdSchaft00,NguyenTurski01})
 We consider a particle moving in $\mathbb{R}^3$, subject to
the nonholonomic constraint $\dot{z}=y\dot{x}$ and a friction force proportional to
the particle velocity. The Hamiltonian is
given in terms of cartesian coordinates
$x,y,z$ and their conjugate momenta by 
\begin{equation}
 H(x,y,z,p_x,p_y.p_z)=\frac{1}{2}(p_x^2+p_y^2+p_z^2).
\end{equation}
The LD structure $L$ is given by
the characteristic distributions
\begin{eqnarray*}
 L \cap V &=& \mbox{span} \{\frac{\partial}{\partial p_z}-y \frac{\partial}{\partial p_x} \} \\
 \rho^*(L) &=& \mbox{span} \{ dx,dy,dz,y dp_z+dp_x,dp_y \}
\end{eqnarray*}
and the bundle map
\begin{equation*}
 \Pi=  B-\tilde{R},
\end{equation*}
where 
\begin{eqnarray*}
 B &=& \left[ \begin{array}{ccc}
O_3 & \ I_3\\
-{I}_3 & O_3 \end{array} \right], \\
\tilde{R}&=&\left[ \begin{array}{ccc}
{O}_3 & {O}_3\\
O_3 & -R  \end{array} \right],
\end{eqnarray*}
such that
\begin{equation*}
 R=\left[ \begin{array}{ccc}
\mu_1 & 0 & 0 \\
0 & \mu_2 & 0 \\
0 & 0 & \mu_3 \end{array} \right]
\end{equation*}
with $\mu_i(q)>0$ is the directional and space-dependent
damping coefficient \cite{NguyenTurski01}.
The equations of motion in brackets is given by
\begin{equation*}
 \dot{z}=\{\{z,H \}\}=\{z,H\}+[z,H]
\end{equation*}
where $z=(q,p)$, or more explicitly
\begin{equation}
 \{q_i,q_j\}=\{p_i,p_j\}=0, \quad \{q_i,p_j\}=\delta_i^j,
\end{equation}
and 
\begin{equation}
 [q_i,q_j]=[q_i,p_j]=0, \quad [p_i,p_j]=-\delta_i^j \mu_i,
\end{equation}
for all $i,j=1,2,3$. Therefore
the equations of motion read
\begin{eqnarray*}
  \left[ \begin{array}{ccc}
\dot{x} \\ 
\dot{y} \\
\dot{z} \\
\dot{p}_x \\
\dot{p}_y \\
\dot{p}_z \end{array} \right] 
 &=&  \left[ \begin{array}{cccccc}
0 & 0 & 0 & 1 & 0 & 0\\
0 & 0 & 0 & 0 & 1 & 0\\
0 & 0 & 0 & 0 & 0 & 1\\
-1 & 0 & 0 & \mu_1 & 0 & 0\\
0 & -1 & 0 & 0 & \mu_2 & 0\\
0 & 0 & -1 & 1 & 0 & \mu_3 \end{array} \right] 
  \left[ \begin{array}{ccc}
0 \\
0 \\
0 \\
p_x \\
p_y \\
p_z \\
 \end{array} \right] 
+  \left[ \begin{array}{ccc}
0 \\
0 \\
0 \\
y \\
0 \\
1 \\
 \end{array} \right]\, \lambda, \\  
 0&=&  yp_x-p_z.
 \end{eqnarray*}
Observe that not every point $x=(q,p) \in T^* \mathbb{R}^3$
satisfies  
\begin{equation}
 (\dot{x},dH(x)) \in L(x),
\end{equation}
but a proper subset 
\begin{equation}
 \chi_c=\{x\in T^* \mathbb{R}^3; \, dH(x) \in \rho^*(L(x))\}.
\end{equation}
It would be interesting to study reduction of the system to 
a subsystem on $\chi_c$ \cite{vdSchaft98}.
\end{example}

\section{Conclusions}

We have defined
linear and smooth Leibniz-Dirac structures which
are generalizations of
Dirac structures. We have studied
the geometry and dynamics of LD
structures both on linear spaces and manifolds.
It has been explained with
several examples
that LD structures are capable of formulating dissipative
implicit  Hamiltonian systems with constraints. 
We hope that LD structures will find applications in
physics and related areas.

However, there remain many questions
on both geometric and dynamics properties of
LD structures
unexplored, some of which are addressed herewith.
As it is known, a more general setting of
Dirac structures on Courant
algebroids has a growing importance \cite{Liuetal97,
Bursztynetal07}. Accordingly, LD structures
may be extended to vector bundles such as algebroids
\cite{GrabowskiUrbanski97,Balseiroetal09}.
Another topic is an investigation of 
transformations that preserve LD structures. 
For this end, one can use the notions of pushed forward
and pull back maps in the sense of 
\cite{BursztynRadko03}. This leads hopefully to symmetry reduction
of LD structures under Lie groups. 

Among LD structures, symmetric Dirac structures
have the richest geometry after Dirac structures.  
We believe that symmetric Dirac structures
are powerful tools in studying 
the geometry of physical systems such as gradient systems with
constraints \cite{Blankenstein03}
and incompressible viscous fluids \cite{NguyenTurski01}.

\section{Acknowledgements}
We are grateful to the anonymous referees for their  careful reading
and their helpful comments and suggestions which improved the paper essentially.

\bibliographystyle{unsrt}
\bibliography{bib}

\end{document}